\documentclass[4pt]{article} 

\usepackage[utf8]{inputenc} 
\usepackage{geometry} 
\usepackage{graphicx} 

\usepackage[colorlinks,citecolor=red]{hyperref}
\usepackage{amsmath}
\usepackage{amsfonts}
\usepackage{dsfont}
\usepackage{mathrsfs}
\usepackage{latexsym}
\usepackage{graphicx}
\usepackage{amssymb}
\usepackage{float}
\usepackage{epsfig}
\usepackage{epstopdf}
\usepackage{color,xcolor}
\usepackage{booktabs} 
\usepackage{array} 
\usepackage{paralist} 
\usepackage{verbatim} 
\usepackage{subfig} 

\newtheorem{theorem}{Theorem}[section]
\newtheorem{lemma}[theorem]{Lemma}
\newtheorem{corollary}[theorem]{Corollary}
\newtheorem{proposition}[theorem]{Proposition}
\newtheorem{definition}[theorem]{Definition}

\newtheorem{remark}[theorem]{Remark}

\usepackage{fancyhdr} 
\usepackage{sectsty}
\allsectionsfont{\sffamily\mdseries\upshape} 

\usepackage[nottoc,notlof,notlot]{tocbibind} 
\usepackage[titles,subfigure]{tocloft} 

\newcommand{\la}{\langle}
\newcommand{\ra}{\rangle}

\title{Hamiltonian paths in the permutation digraphs $P(n,n-2)$}
\author{Jiaxin Guo$^{a}$, Ming Duan$^{a}$, Jie Xue$^{b}$\thanks{Emails:~jiaxin\_guo7@126.com (J. Guo),~mdscience@sina.com (M. Duan),~jie\_xue@126.com (J. Xue).}
\\
{\footnotesize $^a$ nformation Engineering University, 450001 Zhengzhou, China}\\
{\footnotesize $^b$ School of Mathematics and Statistics, Zhengzhou University, 450001 Zhengzhou, China}
}
\date{} 

\begin{document}
\maketitle

\begin{abstract}
For $1\leq k<n$, let $P(n,k)$ be the directed overlap graph whose vertices
are the $k$-permutations of $[n]$ and whose arcs are the
$(k+1)$-permutations. Isaak proved that $P(n,n-2)$ has no directed
Hamiltonian cycle for $n\geq4$ and asked whether it nevertheless has a
directed Hamiltonian path. We answer this question affirmatively by showing that $P(n,n-2)$ has a Hamiltonian path.
\bigskip

\noindent {\bf Key words:} $k$-permutation; Universal cycle; Eulerian tour; Laplacian matrix; Eigenvalue
\end{abstract}

\section{Introduction}

For $1\leq k<n$, the permutation digraph $P(n,k)$ has as its vertices the
ordered $k$-tuples of distinct elements of $[n]=\{1,\ldots,n\}$. There is an
arc
$
 (a_1,\ldots,a_k)\longrightarrow(a_2,\ldots,a_k,b)
$
whenever $a_1,\ldots,a_k,b$ are pairwise distinct. Thus the arcs are
naturally indexed by the $(k+1)$-permutations of $[n]$.

Universal cycles for combinatorial structures were introduced systematically
by Chung, Diaconis and Graham \cite{Chung1992}. Jackson \cite{Jackson1993}
constructed universal cycles for $k$-permutations for every $k\geq3$ and
$n\geq k+1$. The usual universal-cycle model, however, only requires every
new window of length $k$ to be a $k$-permutation. The symbol appended after
$(a_1,\ldots,a_k)$ may therefore equal the symbol $a_1$ that has just left
the window. In $P(n,k)$ the full string $a_1,\ldots,a_k,b$ must consist of
distinct symbols. Hamiltonian cycles and paths in $P(n,k)$ are consequently
more restricted than ordinary universal cycles for $k$-permutations.

Starling, Klerlein, Kier and Carr \cite{Starling2003} initiated the study of
Hamiltonicity in these digraphs. Isaak \cite{Isaak2006} proved that $P(n,k)$
contains a directed Hamiltonian cycle whenever $k\leq n-3$. He also proved
that
\[
 P(n,n-2)\ \text{has no directed Hamiltonian cycle for every }n\geq4.
\]
His proof identifies $P(n,n-2)$ with a two-generator Cayley digraph of $A_n$
and applies Rankin's theorem \cite{Rankin1948,Swan1999}. Isaak observed that
$P(4,2)$ has a Hamiltonian path and reported a computer verification for
$P(5,3)$, leaving the general path problem open.

Several related universal-cycle and Gray-code constructions have since been
developed. Ruskey and Williams \cite{Ruskey2010} gave a loopless construction
for shorthand universal cycles, and Holroyd, Ruskey and Williams
\cite{Holroyd2012} obtained further structural and algorithmic results.
Successor rules and cycle joining form a general framework for universal-cycle
generation \cite{Gabric2020}, while concatenation trees provide another way
to organize such joins \cite{Sawada2023}. Efficient constructions for
ordinary and shorthand permutation universal cycles appear in
\cite{Chang2025,Chang2026}. Other relevant models and problems can be found
in \cite{Jackson2009,Johnson2009,Gao2019,Wong2017}; see
\cite{Brunat1997,Curren1996,Witte1984} for background on permutation digraphs
and Hamiltonian cycles in Cayley graphs and digraphs.

The transitions used in those models do not resolve Isaak's question. Our
argument remains inside his two-generator Cayley digraph, except for one
temporary transition that is removed by cutting a cycle. The main result is
the following.

\begin{theorem}\label{thm:main}
For every integer $n\geq4$, the digraph $P(n,n-2)$ contains a directed
Hamiltonian path.
\end{theorem}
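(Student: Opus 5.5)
A vertex $(a_1,\dots,a_{n-2})$ of $P(n,n-2)$ is determined by the full permutation $\pi=(a_1,\dots,a_{n-2},x,y)$ of $[n]$ up to swapping the last two symbols. Fix a representative by requiring that $\pi$ be even. This identifies the vertices of $P(n,n-2)$ with $A_n$, which is Isaak's identification. An arc appends a symbol $b\in\{x,y\}$ and deletes $a_1$, so each vertex has out-degree $2$. In the even-permutation model the two arcs act by right multiplication by two fixed elements: $s=(1\,2\,\cdots\,n-1)$ and $t=(1\,2\,\cdots\,n)$, up to the parity adjustment dictated by the representative. Exactly one of these is even for each parity of $n$ after the adjustment, so one gets generators $\sigma,\tau$ of $A_n$.

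The plan has four steps.

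\textbf{Step 1: factor into cycles.} Use only $\sigma$-arcs. The orbits of $\langle\sigma\rangle$ partition $A_n$ into disjoint directed cycles of length $\mathrm{ord}(\sigma)$, giving a cycle factor $F$ of $P(n,n-2)$.

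\textbf{Step 2: merge cycles by swaps.} Merge the cycles of $F$ by the standard Rankin-type operation. Whenever $g\sigma=h\tau$-type coincidences occur, the vertices $v$ and $v\sigma\tau^{-1}$ share an out-neighbour. Replacing the two $\sigma$-arcs leaving $v$ and $v\sigma\tau^{-1}\sigma^{-1}\tau$ by the corresponding $\tau$-arcs exchanges successors. This produces a "swap" at an alternating $4$-cycle $\sigma\tau^{-1}\sigma\tau^{-1}$. Such a swap merges two cycles if they are distinct and splits one cycle otherwise.

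\textbf{Step 3: track parity with an auxiliary graph.} Build the auxiliary graph whose vertices are the cycles of $F$ and whose edges are the available disjoint swaps. Rankin's theorem says the number of cycles changes parity in a controlled way, and it forbids reaching a single cycle; this is exactly Isaak's obstruction. So the target is to reach exactly two cycles, or one cycle plus an extra transition.

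\textbf{Step 4: use a temporary arc and cut.} Introduce one temporary transition that is not an arc of $P(n,n-2)$, for instance the universal-cycle transition appending the symbol just removed. With it, the two remaining cycles can be merged into a single spanning closed walk. Cutting this cycle at the temporary arc leaves a Hamiltonian path in $P(n,n-2)$. Equivalently, join the two cycles $C_1,C_2$ directly: delete an arc $u\to u'$ of $C_1$ and an arc $w\to w'$ of $C_2$ such that $u\to w'$ is an arc of $P(n,n-2)$. Then $w'\to\cdots\to w$ along $C_2$, followed by $\ldots$, is not quite right; the cleaner formulation is the cut argument above. The resulting path is $u'\leadsto u\to w'\leadsto w$.

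The main obstacle is Step 3: showing that a family of pairwise disjoint swaps connects all but one of the cycles. This needs a spanning tree of the auxiliary graph in which the chosen alternating $4$-cycles are vertex-disjoint, and it needs an explicit description of $\sigma$-orbits in terms of the cyclic arrangement of the symbols. I would first try to organise the orbits by an invariant, such as the cyclic order of $[n]$ read around the permutation with one symbol marked. I would then choose swaps that change the invariant by an adjacent transposition, giving a path-like tree, and finally verify disjointness by a counting argument on positions.
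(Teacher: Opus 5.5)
Your outer framework matches the paper's: start from the cycle factor of one generator, merge cycles by local switches, stop at two cycles because of the parity obstruction, then cut. Your final form of Step~4 is also right. Given a two-cycle factor $C_1\cup C_2$, strong connectivity of the Cayley digraph gives an arc $u\to w'$ with $u\in C_1$ and $w'\in C_2$, and then $u'\leadsto u\to w'\leadsto w$ is a Hamiltonian path. This is exactly what the splice of Lemma~\ref{lem:splice} does.

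The gap is in Steps~2 and~3. Compute $\sigma\tau^{-1}$ for the actual generators; for odd $n$ these are $(1\,2\,\cdots\,n)$ and $(1\,2\,\cdots\,n-2\,n\,n-1)$. You get the $3$-cycle $\rho=(1\ n\ n-1)$, not an involution. The sources that share out-neighbours are therefore $v,v\rho,v\rho^2$, so every alternating cycle has length $6$. There is no two-source swap along $\sigma\tau^{-1}\sigma\tau^{-1}$, and switching one source forces switching all three elements of $v\langle\rho\rangle$. Such a switch cyclically permutes three successors. When the three sources lie in distinct cycles it merges them into one. Since it is an even change of the successor permutation, it never changes the number of cycles by one. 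Your claim that a swap merges two cycles is in fact inconsistent with the parity invariant you invoke in Step~3.

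Consequently the auxiliary structure is not a graph but the $3$-uniform hypergraph $\mathcal H_n$: its vertices are the $\langle\alpha\rangle$-cosets and its hyperedges are the $\langle\rho\rangle$-cosets. What must be shown is that $\mathcal H_n$ has a spanning loose hyperforest with exactly two components. Disjointness of the switches, which you single out as the difficulty, is automatic, because the $\langle\rho\rangle$-cosets partition $A_n$. The real constraint is that each added hyperedge meets three distinct current components. Connectivity of a $3$-uniform hypergraph does not yield such a system the way connectivity of a graph yields a spanning tree. Your sketch (an invariant moved by adjacent transpositions, a path-like tree, a counting argument) does not address this, and it is the entire content of the theorem.

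The paper supplies it in three steps.
\begin{itemize}
\item It embeds permutation hypergraphs $K(T,\Omega)$, generated by $3$-cycles on the triangles of a $2$-tree, into $\mathcal H_n$: a $2$-path for odd $n$, and a fan on the part $U$ for even $n$.
\item It proves by induction on $m$ that $K(T,\Omega)-r$ has a spanning loose hypertree. The layers are joined by hyperedges $\{r_j,r_j\tau,r_j\tau^2\}$, chained along a cyclic permutation chosen to avoid one adjacency.
\item For even $n$, it attaches the omitted layer $R$ and the part $W$ through a perfect matching of a $2$-regular bipartite multigraph.
\end{itemize}
Without an argument of this kind, your proposal stops at the reduction.
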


Let $\alpha,\beta\in A_n$ be the Cayley generators and put
$
 \rho=\alpha\beta^{-1}=(1\ n\ n-1).$
The orbits of right multiplication by $\alpha$ are the left
$\la\alpha\ra$-cosets. Every left $\la\rho\ra$-coset contains three elements
belonging to three distinct $\la\alpha\ra$-cosets. These incidences define a
$3$-uniform hypergraph $\mathcal H_n$. Switching all three successors on a
left $\la\rho\ra$-coset from $\alpha$ to $\beta$ merges three cycles whenever
the three sources lie in different cycles. A spanning loose hyperforest of
$\mathcal H_n$ with two components therefore gives a directed cycle cover
with two cycles. A crossing hyperedge permits a splice with one invalid
transition; deleting this transition leaves a Hamiltonian path.

The required two-component hyperforest is obtained from a root-deleted
spanning hypertree theorem for permutation hypergraphs generated by the
triangles of a $2$-tree. For odd $n$ the needed subhypergraph comes from a
$2$-path. For even $n$ it comes from a fan $2$-tree, after which the
remaining layers are attached by a perfect matching in a $2$-regular
bipartite multigraph.

Section~2 introduces the Cayley-digraph model and the coset hypergraph
$\mathcal{H}_n$. Section~3 reduces the problem to constructing a
spanning loose hyperforest with two components and proves the required
hypertree theorem. Section~4 constructs such a hyperforest using a
$2$-path for odd $n$ and a fan $2$-tree together with a perfect
matching for even $n$. The case $n=4$ is immediate, and these results
complete the proof of Theorem~1.1.

\section{Preliminaries}\label{sec:cayley}

Permutations are functions on $[n]$ and are composed from right to left:
$(gh)(i)=g(h(i))$. We write a permutation $g$ in one-line notation as
$
 g=(g(1),g(2),\ldots,g(n)).
$
Thus right multiplication changes positions, whereas left multiplication
changes values. For a group $G$ and $S\subseteq G$, the right Cayley digraph
$\operatorname{Cay}(G,S)$ has arcs
$
 g\longrightarrow gs\qquad(g\in G,\ s\in S).
$
This convention is convenient for overlap digraphs, since a right multiplier
can shift the entries of a one-line permutation.

For even $n$, define
\begin{equation}\label{eq:generators-even}
 \alpha=(1\ 2\ \cdots\ n-1),\qquad
 \beta=(1\ 2\ \cdots\ n-2\ n),
\end{equation}
and for odd $n$, define
\begin{equation}\label{eq:generators-odd}
 \alpha=(1\ 2\ \cdots\ n),\qquad
 \beta=(1\ 2\ \cdots\ n-2\ n\ n-1).
\end{equation}
When $n$ is even, both cycles in \eqref{eq:generators-even} have length
$n-1$; when $n$ is odd, both cycles in \eqref{eq:generators-odd} have length
$n$. In either case their lengths are odd, so $\alpha$ and $\beta$ are even
permutations.

Every vertex $v=(a_1,\ldots,a_{n-2})$ of $P(n,n-2)$ has two missing symbols,
say $u$ and $v'$. The two completions
$
 (a_1,\ldots,a_{n-2},u,v')
\text{ and }
 (a_1,\ldots,a_{n-2},v',u)
$
differ by a transposition. Exactly one is even. Denote this unique even
completion by $\widehat v$.

\begin{lemma}\label{lem:cayley-isomorphism}
The completion map $v\mapsto\widehat v$ is a digraph isomorphism
\begin{equation}\label{eq:cayley-isomorphism}
 P(n,n-2)\simeq\operatorname{Cay}(A_n,\{\alpha,\beta\}).
\end{equation}
\end{lemma}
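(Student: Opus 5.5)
The plan is to check three things in turn: that $v\mapsto\widehat v$ is a bijection between the vertex sets, that it sends arcs of $P(n,n-2)$ to arcs of $\operatorname{Cay}(A_n,\{\alpha,\beta\})$, and that the two digraphs have the same number of arcs.

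\emph{Bijection on vertices.} The map $g\mapsto(g(1),\ldots,g(n-2))$ is an inverse of the completion map. Indeed, for $g\in A_n$ the tuple $(g(1),\ldots,g(n-2))$ is an $(n-2)$-permutation of $[n]$, and its even completion is $g$ itself, because the other completion is $g$ composed with a transposition and hence odd. Conversely, every vertex $v$ has exactly one even completion $\widehat v$, and the first $n-2$ entries of $\widehat v$ are $v$. So the map is a bijection $V(P(n,n-2))\to A_n$.

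\emph{Arcs.} Fix $g=\widehat v$ with $v=(a_1,\ldots,a_{n-2})$. The two missing symbols are $g(n-1)$ and $g(n)$. Using $(g s)(i)=g(s(i))$, we compute the one-line forms of $g\alpha$ and $g\beta$.
\begin{itemize}
\item For odd $n$:
\[
g\alpha=(g(2),\ldots,g(n),g(1)),\qquad g\beta=(g(2),\ldots,g(n-2),g(n),g(1),g(n-1)).
\]
\item For even $n$:
\[
g\alpha=(g(2),\ldots,g(n-1),g(1),g(n)),\qquad g\beta=(g(2),\ldots,g(n-2),g(n),g(n-1),g(1)).
\]
\end{itemize}
In both parities, the first $n-2$ entries are therefore $(a_2,\ldots,a_{n-2},g(n-1))$ for $g\alpha$ and $(a_2,\ldots,a_{n-2},g(n))$ for $g\beta$.

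These are exactly the two out-neighbours of $v$ in $P(n,n-2)$. An out-neighbour has the form $(a_2,\ldots,a_{n-2},b)$ with $b\notin\{a_1,\ldots,a_{n-2}\}$, and the only such $b$ are the two missing symbols. Since $\alpha$ and $\beta$ are even, $g\alpha$ and $g\beta$ lie in $A_n$. Each of them is therefore the unique even completion of its own $(n-2)$-prefix, so $g\alpha=\widehat{w}$ and $g\beta=\widehat{w'}$, where $w$ and $w'$ are the two out-neighbours of $v$. The two out-neighbours are distinct, so $\alpha\ne\beta$ does not create a multiple arc.

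\emph{Counting arcs.} Every vertex has out-degree exactly $2$ in both digraphs. So the map is a bijection on vertices that maps arcs injectively onto arcs, and both arc sets have size $2|A_n|$. Hence it is an isomorphism.

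The only delicate step is the index bookkeeping in the one-line computation of $g\beta$, which I would verify for both parities directly from $\beta(i)$. Everything else is immediate.
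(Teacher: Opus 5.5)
Your proposal is correct and follows essentially the same route as the paper: you show the completion map is bijective, compute the $(n-2)$-prefixes of $g\alpha$ and $g\beta$ as $(a_2,\ldots,a_{n-2},g(n-1))$ and $(a_2,\ldots,a_{n-2},g(n))$ (your one-line formulas check out for both parities), and use evenness of $\alpha,\beta$ to identify these products with the even completions of the two out-neighbours. Your explicit inverse and arc count are slightly more detailed versions of the paper's cardinality argument and its conclusion that the map preserves and reflects arcs.
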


\begin{proof}
The map is injective by construction. Both sets have cardinality $n!/2$, so
it is bijective. Write
$
 \widehat v=(a_1,\ldots,a_{n-2},u,v').
$
For either parity of $n$, the definitions above give
\[
\begin{aligned}
 (\widehat v\alpha)(1,\ldots,n-2)
   &=(a_2,\ldots,a_{n-2},u),\\
 (\widehat v\beta)(1,\ldots,n-2)
   &=(a_2,\ldots,a_{n-2},v').
\end{aligned}
\]
These are exactly the two out-neighbours of $v$ in $P(n,n-2)$. Since
$\widehat v$, $\alpha$ and $\beta$ are even, the products are precisely the
even completions of those out-neighbours. Hence the map preserves and
reflects all arcs.
\end{proof}

The construction is a regular group action: the vertex set is identified
with $A_n$, and the two overlap moves are induced by right multiplication by
$\alpha$ and $\beta$. No linear representation theory is used; the relevant
group representation is the concrete Cayley action together with the coset
decompositions associated with two cyclic subgroups.

Put
\begin{equation}\label{eq:rho}
 \rho=\alpha\beta^{-1}=(1\ n\ n-1).
\end{equation}
Consequently,
\begin{equation}\label{eq:relations}
 \rho^3=1,\qquad \alpha=\rho\beta,\qquad \beta=\rho^{-1}\alpha.
\end{equation}
The last identity turns an $\alpha$-successor into a $\beta$-successor after
moving the source once around a $\rho$-orbit.

Recall that the adjacent $3$-cycles
$
 (1\ 2\ 3),(2\ 3\ 4),\ldots,(n-2\ n-1\ n)
$
generate $A_n$.

\begin{lemma}\label{lem:generation}
For every $n\geq4$,
$
 \la\alpha,\beta\ra=\la\alpha,\rho\ra=A_n.
$
\end{lemma}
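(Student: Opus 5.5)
Since $\rho=\alpha\beta^{-1}$ and $\beta=\rho^{-1}\alpha$, the subgroups $\la\alpha,\beta\ra$ and $\la\alpha,\rho\ra$ coincide. Both are contained in $A_n$ because $\alpha,\beta$ are even. So it suffices to show that $H:=\la\alpha,\rho\ra$ contains every adjacent $3$-cycle $(i\ i+1\ i+2)$. By the fact recalled above, these generate $A_n$.

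The main tool is conjugation by powers of $\alpha$. Recall $\sigma(a\ b\ c)\sigma^{-1}=(\sigma(a)\ \sigma(b)\ \sigma(c))$ and $\rho=(1\ n\ n-1)$.

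\emph{Odd $n$.} Here $\alpha=(1\ 2\ \cdots\ n)$ acts as $i\mapsto i+1$ modulo $n$. Then $\alpha^{2}\rho\alpha^{-2}=(3\ 2\ 1)=(1\ 2\ 3)^{-1}$. Further conjugating by $\alpha^{j}$ produces $(j+1\ j+2\ j+3)^{-1}$ with indices mod $n$. Taking $j=0,\ldots,n-3$ gives all adjacent $3$-cycles (up to inversion, which is harmless in a group). Hence $H=A_n$.

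\emph{Even $n$.} Here $\alpha=(1\ 2\ \cdots\ n-1)$ fixes $n$. Conjugating $\rho$ by $\alpha^{j}$ yields $(j+1\ n\ j)$, indices mod $n-1$ in $\{1,\dots,n-1\}$. So $H$ contains every $3$-cycle $(j\ j+1\ n)^{\pm1}$ with $j,j+1$ cyclically consecutive in $[n-1]$. Products of two such cycles sharing $n$ produce $3$-cycles avoiding $n$. Concretely, $(1\ 2\ n)$ and $(2\ 3\ n)$ together generate $A_{\{1,2,3,n\}}\ni(1\ 2\ 3)$. This can be checked directly: $(2\ 3\ n)(1\ 2\ n)$ is a $3$-cycle on $\{1,2,3\}$ or on $\{1,3,n\}$. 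Whichever it is, combining it with the others gives $(1\ 2\ 3)$, since two distinct $3$-cycles on a $4$-set generate $A_4$. Conjugating by powers of $\alpha$ then gives all $(j\ j+1\ j+2)$ for $j\le n-3$. We already have $(n-2\ n-1\ n)$ from the family $(j\ j+1\ n)$. So all adjacent $3$-cycles lie in $H$.

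The main obstacle is only the bookkeeping of the cyclic index shifts and orientations in the conjugation formula. In the even case one must also confirm that $A_4$ on $\{1,2,3,n\}$ is generated by the two $3$-cycles. That holds because $A_4$ has no subgroup of order $6$, so two distinct $\la 3\text{-cycle}\ra$ subgroups generate all of it.
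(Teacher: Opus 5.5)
Your proof is correct and follows essentially the paper's route: conjugate $\rho$ by powers of $\alpha$ to obtain the adjacent $3$-cycles (up to inversion) when $n$ is odd, and the cycles $(j\ j+1\ n)$ when $n$ is even. In the even case you then combine two consecutive such cycles to produce $3$-cycles avoiding $n$. The only cosmetic difference is in that last step. The paper computes $\delta_i\delta_{i+1}=(i\ i+1\ i+2)$ directly for every $i$ and then uses $A_n=\la A_{n-1},\text{a $3$-cycle through } n\ra$. You instead get $(1\ 2\ 3)$ from the fact that $A_4$ has no subgroup of order $6$, and then conjugate by $\alpha$. (Under the paper's right-to-left convention your hedged product $(2\ 3\ n)(1\ 2\ n)$ is $(1\ 3\ n)$.)
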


\begin{proof}
The first equality follows from \eqref{eq:relations}. Suppose first that $n$
is odd. For $1\leq i\leq n-2$,
$
 \alpha^{i+1}\rho\alpha^{-(i+1)}
   =(i\ i+2\ i+1),
$
the inverse of $(i\ i+1\ i+2)$. Hence $\la\alpha,\rho\ra=A_n$.

Now let $n$ be even and put $m=n-1$. For $1\leq i\leq m-1$, set
$
 \delta_i=\alpha^i\rho\alpha^{-i}=(i+1\ n\ i).
$
For $1\leq i\leq m-2$, direct multiplication gives
$
 \delta_i\delta_{i+1}=(i\ i+1\ i+2).
$
Thus the generated subgroup contains the natural copy of $A_m$ fixing $n$.
It also contains $\rho=(1\ n\ m)$. Since $A_n$ is generated by $A_m$ and
any $3$-cycle containing $n$ and two points of $[m]$, the result follows.
\end{proof}

The permutation $g\mapsto g\alpha$ decomposes $A_n$ into directed cycles.
The orbit through $g$ is the left coset
$
 g\la\alpha\ra
   =\{g,g\alpha,\ldots,g\alpha^{|\alpha|-1}\}.
$
The orbit of $g$ under right multiplication by $\rho$ is
$g\la\rho\ra=\{g,g\rho,g\rho^2\}$. Write
$
 \mathcal A_n=A_n/\la\alpha\ra$ and $
 \mathcal R_n=A_n/\la\rho\ra.
$

\begin{lemma}\label{lem:trivial-intersection}
For every $n\geq4$,
$
 \la\alpha\ra\cap\la\rho\ra=\{1\}.
$
Consequently, each coset in $\mathcal R_n$ meets exactly three distinct
cosets in $\mathcal A_n$, in one element each.
\end{lemma}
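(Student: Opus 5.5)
Since $\rho$ has order $3$ by \eqref{eq:relations}, the subgroup $\la\rho\ra=\{1,\rho,\rho^{2}\}$ has prime order. So $\la\alpha\ra\cap\la\rho\ra$, being a subgroup of $\la\rho\ra$, is either trivial or all of $\la\rho\ra$. It therefore suffices to show $\rho\notin\la\alpha\ra$.

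\emph{Step 1: $\rho\notin\la\alpha\ra$.} The permutation $\alpha$ is a single cycle of odd length $\ell$, with $\ell=n-1$ for even $n$ and $\ell=n$ for odd $n$. Every power $\alpha^{j}$ moves either no point or exactly the same support as $\alpha$. Indeed, $\alpha^j$ restricted to the support of $\alpha$ is a product of $\gcd(j,\ell)$ cycles of length $\ell/\gcd(j,\ell)$, which is fixed-point-free unless $\alpha^j=1$. Hence a nontrivial element of $\la\alpha\ra$ has support of size $\ell\geq 3$. Equality $\ell=3$ occurs only when $n=4$, where $\alpha=(1\ 2\ 3)$ has support $\{1,2,3\}\neq\{1,4,3\}$, the support of $\rho=(1\ n\ n-1)$. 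For $n\geq5$ we have $\ell\geq 5>3$, so no nontrivial power of $\alpha$ is a $3$-cycle. In all cases $\rho\notin\la\alpha\ra$, and therefore $\la\alpha\ra\cap\la\rho\ra=\{1\}$.

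\emph{Step 2: the coset statement.} Take a coset $g\la\rho\ra=\{g,g\rho,g\rho^2\}$. These are three distinct elements because $\rho$ has order $3$. Suppose two of them, $g\rho^{i}$ and $g\rho^{j}$ with $i\neq j$, lie in the same left $\la\alpha\ra$-coset. Then $(g\rho^{i})^{-1}g\rho^{j}=\rho^{j-i}\in\la\alpha\ra$. This element is a nontrivial element of $\la\rho\ra$, which contradicts Step 1. So the three elements lie in three distinct cosets of $\mathcal A_n$. Each of these cosets meets $g\la\rho\ra$ in exactly one element, and no other coset of $\mathcal A_n$ meets it.

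There is no real obstacle here. The only point needing care is the support argument for powers of a single cycle, including the small case $n=4$, where $\alpha$ is itself a $3$-cycle and one must compare supports explicitly.
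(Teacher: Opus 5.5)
Your proof is correct and follows the paper's argument. Both rule out $\rho^{\pm1}\in\la\alpha\ra$ using the cycle structure of powers of the single cycle $\alpha$, and both derive the coset statement from the quotient $\rho^{j-i}$ exactly as you do; the only difference is the case split, since the paper splits by parity (for even $n$ every power of $\alpha$ fixes $n$ while $\rho$ moves it, and for odd $n$ an order-$3$ power of an $n$-cycle is a product of $n/3$ disjoint $3$-cycles), whereas you use the prime order of $\la\rho\ra$ and a uniform support-size count with $n=4$ checked separately.
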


\begin{proof}
If $n$ is even, every element of $\la\alpha\ra$ fixes $n$, whereas $\rho$ and
$\rho^2$ move $n$. If $n$ is odd, $\alpha$ is an $n$-cycle. A nonidentity
power of $\alpha$ having order $3$, when such a power exists, is a product of
$n/3$ disjoint $3$-cycles. Since $n\geq5$, it cannot be the single
$3$-cycle $\rho$ or $\rho^2$. The intersection is trivial.

If two elements of $g\la\rho\ra$ belonged to the same
$\la\alpha\ra$-coset, their quotient would be a nonidentity element of the
intersection, a contradiction.
\end{proof}

\begin{definition}\label{def:Hn}
The coset hypergraph $\mathcal H_n$ is the $3$-uniform hypergraph with vertex
set $\mathcal A_n$ and hyperedge set $\mathcal R_n$, where incidence means
nonempty intersection of cosets.
\end{definition}

If $R=x\la\rho\ra$, its three incident vertices are
$
 x\la\alpha\ra,
 x\rho\la\alpha\ra$ and $
 x\rho^2\la\alpha\ra.
$
Lemma~\ref{lem:trivial-intersection} shows both that these vertices are
distinct and that the group element at each incidence is unique.

\begin{lemma}\label{lem:H-connected}
The hypergraph $\mathcal H_n$ is connected.
\end{lemma}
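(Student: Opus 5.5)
The plan is to reduce connectivity of $\mathcal H_n$ to the generation statement of Lemma~\ref{lem:generation}. We define an equivalence relation on $A_n$: $g\sim h$ when the $\la\alpha\ra$-cosets $g\la\alpha\ra$ and $h\la\alpha\ra$ lie in the same connected component of $\mathcal H_n$. The goal is to show that this relation has a single class, which is exactly the statement that $\mathcal H_n$ is connected.

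First, $\sim$ is invariant under right multiplication by $\alpha$, since $g\alpha\la\alpha\ra=g\la\alpha\ra$ is the same vertex. Second, $\sim$ is invariant under right multiplication by $\rho$. Indeed, the hyperedge $g\la\rho\ra$ is incident with both vertices $g\la\alpha\ra$ and $g\rho\la\alpha\ra$, as recorded after Definition~\ref{def:Hn}, so these two vertices lie in one component. Hence $g\sim g\alpha$ and $g\sim g\rho$ hold for every $g\in A_n$. Because $A_n$ is finite, $\alpha^{-1}$ and $\rho^{-1}$ are positive powers of $\alpha$ and $\rho$. Therefore $g\sim gw$ for every word $w$ in $\alpha$ and $\rho$.

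By Lemma~\ref{lem:generation}, $\la\alpha,\rho\ra=A_n$, so every $h\in A_n$ can be written as $h=1\cdot w$ for such a word $w$. This gives $1\sim h$ for all $h$. Thus every vertex of $\mathcal H_n$ lies in the component of $\la\alpha\ra$, and $\mathcal H_n$ is connected.

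No step here is a real obstacle, since all the work is done in Lemma~\ref{lem:generation}. The only point needing care is to use right multiplication consistently. The vertices of $\mathcal H_n$ are left cosets $g\la\alpha\ra$, and the hyperedges are left cosets $x\la\rho\ra$. Under this convention, moving along an incidence corresponds to right multiplication by $\rho$, and staying at a vertex corresponds to right multiplication by $\alpha$. So the orbit of $1$ under the right action of the generated group is the entire group.
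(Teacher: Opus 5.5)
Your proof is correct and follows the paper's argument: both use $\la\alpha,\rho\ra=A_n$ and project a word in the generators onto the $\la\alpha\ra$-cosets. Right multiplication by $\alpha$ stays at one vertex, and right multiplication by $\rho$ moves within a single hyperedge. Your equivalence-relation phrasing, with inverses written as positive powers, is only a more formal presentation of the paper's walk.
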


\begin{proof}
By Lemma~\ref{lem:generation}, every element of $A_n$ is a word in
$\alpha^{\pm1}$ and $\rho^{\pm1}$. Right multiplication by $\alpha$ stays
inside one vertex of $\mathcal H_n$, while right multiplication by $\rho$
moves between vertices incident with a common hyperedge. Projecting a word
to the $\la\alpha\ra$-cosets gives a walk between any two vertices.
\end{proof}

The incidence graph of a hypergraph is the bipartite graph whose parts are
its vertices and hyperedges. A $3$-uniform component is a \emph{loose
hypertree} if its incidence graph is a tree; an isolated vertex is also a
loose hypertree. A \emph{loose hyperforest} is a vertex-disjoint union of
loose hypertrees. Equivalently, the hyperedges in each nontrivial component
can be ordered so that the first has three vertices and every later hyperedge
meets the preceding vertex union in exactly one vertex and introduces two
new vertices. A spanning loose hyperforest with $N$ vertices and $c$
components has $(N-c)/2$ hyperedges.

\section{Cycle covers and hypergraphs}\label{sec:reduction}

Start with the successor permutation
$
 s_0(g)=g\alpha~(g\in A_n).
$
Its directed cycles are the cosets in $\mathcal A_n$.

\begin{lemma}\label{lem:switch}
Let $R=x\la\rho\ra=\{x,x\rho,x\rho^2\}$. Replace
$g\mapsto g\alpha$ by $g\mapsto g\beta$ for all $g\in R$. The resulting
successor map is again a permutation of $A_n$. If the three elements of $R$
lie in three distinct directed cycles before the switch, the switch merges
those cycles into one.
\end{lemma}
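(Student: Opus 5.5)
We first show that the switched map $s$ is a permutation by checking that it permutes the image set of $R$. Since $\beta=\rho^{-1}\alpha$ by \eqref{eq:relations}, for $g=x\rho^i$ we get
\[
 g\beta=x\rho^{i}\rho^{-1}\alpha=x\rho^{i-1}\alpha .
\]
As $i$ runs over $0,1,2$, the new images $\{x\rho^{i-1}\alpha\}$ are exactly the old images $\{x\rho^{i}\alpha\}$, taken in a cyclically shifted order. Hence $s$ and $s_0$ have the same image set on $R$. Off $R$ the two maps agree, so $s=s_0\circ\pi$, where $\pi$ is the permutation of $A_n$ that cyclically permutes $R$ ($x\rho^i\mapsto x\rho^{i-1}$) and fixes all other elements. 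A composition of permutations is a permutation, which proves the first claim.

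For the merging claim we use the standard fact that precomposing a permutation with a $3$-cycle whose three points lie in three distinct cycles joins those cycles into one. We make this explicit. Write $g_i=x\rho^i$ and let $C_i$ be the $s_0$-cycle through $g_i$. Traverse $C_i$ starting from $g_i\alpha$; it returns to $g_i$ after visiting only elements of $C_i$, and it meets no other element of $R$, because the $C_i$ are distinct and each contains exactly one element of $R$. In $s$, the successor of $g_i$ is $g_{i-1}\alpha$, which is the start of the traversal of $C_{i-1}$. Following $s$ therefore gives
\[
 g_0\to(\text{path of }C_2)\to g_2\to(\text{path of }C_1)\to g_1\to(\text{path of }C_0)\to g_0 .
\]
This is a single cycle whose vertex set is $C_0\cup C_1\cup C_2$. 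All other cycles of $s_0$ are untouched, since $s$ agrees with $s_0$ on them.

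The only delicate point is the bookkeeping of indices. We must check that the direction of the shift is consistent and that each arc segment contains no other switched element; distinctness of the three cycles guarantees the latter. As a direct consequence, applying switches along the hyperedges of a loose hyperforest merges cycles according to its components. The paper presumably uses this next, though it is not part of the present statement.
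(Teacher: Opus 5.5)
Your proof is correct and is the paper's argument made explicit: the relation $\beta=\rho^{-1}\alpha$ shows that the switch only cyclically permutes the three targets on $R$ (your factorization $s=s_0\circ\pi$), and your traced cycle $g_0\to C_2\to g_2\to C_1\to g_1\to C_0\to g_0$ spells out the joining step that the paper merely asserts. One small remark: your argument uses $s_0$ only as a permutation sending each $g\in R$ to $g\alpha$, so it applies verbatim to the later switches in Proposition~\ref{prop:forest-cover}, where the map before the switch is no longer $s_0$ and the distinct-cycles hypothesis actually has to be checked.
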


\begin{proof}
Since $\alpha=\rho\beta$,
$
 \{x\rho^i\alpha:0\leq i\leq2\}
   =\{x\rho^i\beta:0\leq i\leq2\}.
$
Thus the operation cyclically permutes three successor targets and leaves
every indegree equal to one. If the sources belong to distinct cycles, this
cyclic permutation of their targets joins those cycles into one.
\end{proof}

\begin{proposition}\label{prop:parity}
Every directed cycle cover of
$\operatorname{Cay}(A_n,\{\alpha,\beta\})$ has an even number of cycles.
In particular, the digraph has no directed Hamiltonian cycle.
\end{proposition}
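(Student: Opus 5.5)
Every cycle cover of $\operatorname{Cay}(A_n,\{\alpha,\beta\})$ is a successor map $s$ with $s(g)\in\{g\alpha,g\beta\}$ for all $g$, and $s$ must be a permutation of $A_n$. We will show that the only such successor maps are those obtained from $s_0$ by switching whole $\la\rho\ra$-cosets. The parity statement then follows from a sign computation in the symmetric group on $A_n$, in the spirit of Rankin's argument.

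\emph{Step 1: bijectivity forces coset-wise switching.} Let $S\subseteq A_n$ be the set of $g$ with $s(g)=g\beta$. Suppose $g\in S$. Because $\beta=\rho^{-1}\alpha$, the target $g\beta$ equals $(g\rho^{-1})\alpha$, so it coincides with the $\alpha$-target of $g\rho^{-1}$. Since $s$ is injective, $g\rho^{-1}$ cannot use $\alpha$, and therefore $g\rho^{-1}\in S$. Iterating around the $\rho$-orbit, which has length $3$ because $\rho^3=1$, shows that $S$ is a union of left $\la\rho\ra$-cosets. Conversely, Lemma~\ref{lem:switch} shows that any such union gives a permutation. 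Consequently
\[
 s=s_0\circ\tau,
\]
where $\tau$ is the product, over the switched cosets $x\la\rho\ra$, of the $3$-cycles $x\mapsto x\rho^{-1}$ acting on each coset. Each of these is an even permutation of the set $A_n$, so $\operatorname{sgn}(s)=\operatorname{sgn}(s_0)$, with signs taken in $\operatorname{Sym}(A_n)$.

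\emph{Step 2: the sign of $s_0$.} The map $s_0$ consists of $N=|A_n|/|\alpha|$ disjoint cycles, each of length $|\alpha|$, and $|\alpha|$ is odd ($n-1$ or $n$). Hence $s_0$ is even. For any permutation of a finite set of size $M$ with $c$ cycles, the sign is $(-1)^{M-c}$. Here $M=n!/2$, which is even for $n\geq4$. Evenness of $s$ therefore forces $c$ to be even. Since $c=1$ is odd, a Hamiltonian cycle cannot exist.

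\emph{Main obstacle.} The key point is Step 1: a mixed choice that is not constant on $\la\rho\ra$-cosets would destroy injectivity. This follows from the identity $g\beta=(g\rho^{-1})\alpha$, applied once and then iterated around the coset. The sign bookkeeping in Step 2 is routine.
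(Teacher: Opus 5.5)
Your proof is correct and follows the paper's argument. Bijectivity forces the $\beta$-choices to form a union of $\la\rho\ra$-cosets, each switch composes the successor permutation with an even $3$-cycle, and the parity of the cycle count is then forced; the only differences are cosmetic. You obtain coset-closure by propagating injectivity along $g\mapsto g\rho^{-1}$ rather than through the paper's local bijection onto the targets $x\rho^i\beta$, and you finish from $\operatorname{sgn}(s_0)=+1$ (odd cycle lengths) with $|A_n|$ even instead of computing the parity of $q_n$ explicitly.
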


\begin{proof}
Let $s$ be a successor permutation with
$s(g)\in\{g\alpha,g\beta\}$ for every $g\in A_n$. Fix
$R=\{x,x\rho,x\rho^2\}$ and put $t_i=x\rho^i\beta$, with indices modulo
$3$. The source $x\rho^i$ is sent to $t_i$ by a $\beta$-choice and to
$t_{i+1}$ by an $\alpha$-choice. These sources map bijectively to the three
targets only when all three choices are $\alpha$ or all three are $\beta$.
Any mixed choice repeats one target and omits another. Thus every cycle cover
is obtained from $s_0$ by switching a set of whole
$\la\rho\ra$-cosets.

Each switch is a $3$-cycle on successor targets and is even. The sign of the
successor permutation is unchanged. A permutation of $N$ points with $c$
cycles has sign $(-1)^{N-c}$, so the parity of $c$ is unchanged. The initial
number of cycles is
\begin{equation}\label{eq:qn}
 q_n=|A_n:\la\alpha\ra|=
 \begin{cases}
  (n-1)!/2,& n\ \text{odd},\\[2mm]
  n(n-2)!/2,& n\ \text{even}.
 \end{cases}
\end{equation}
This is even for odd $n\geq5$, for $n=4$, and for even $n\geq6$.
\end{proof}

Distinct cosets in $\mathcal R_n$ are disjoint, so switches on different
hyperedges act on disjoint sets of sources.

\begin{proposition}\label{prop:forest-cover}
Let $F$ be a spanning loose hyperforest of $\mathcal H_n$ with $c$
components. Switching every hyperedge of $F$ produces a directed cycle cover
with exactly $c$ cycles. The elements in the $\la\alpha\ra$-cosets of one
component of $F$ form one directed cycle.
\end{proposition}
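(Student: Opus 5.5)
We argue by induction on the number of hyperedges of $F$, switching them one at a time in the order supplied by the definition of a loose hyperforest. Fix for each nontrivial component an ordering $E_1,\dots,E_r$ of its hyperedges such that $E_1$ has three vertices and each later $E_j$ meets the union of the vertices of $E_1,\dots,E_{j-1}$ in exactly one vertex and brings in two new ones. Concatenate these orderings over all components. Distinct cosets in $\mathcal R_n$ are disjoint, so each switch changes the successor only on its own three sources. Hence the switches commute and the order of application does not affect the final successor map.

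The induction hypothesis will be the following. After the first $j$ hyperedges of the concatenated list have been switched, the cycles of the current successor permutation $s_j$ are in bijection with the components of the partial hyperforest $F_j$ formed by these hyperedges on the full vertex set $\mathcal A_n$. Moreover, the elements of the $\la\alpha\ra$-cosets in one component of $F_j$ form exactly one cycle of $s_j$. For $j=0$ this is the statement that the cycles of $s_0$ are the cosets in $\mathcal A_n$, each of which is an isolated vertex of $F_0$.

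For the inductive step, let $R=x\la\rho\ra$ be the next hyperedge. By Lemma~\ref{lem:trivial-intersection} its three elements $x,x\rho,x\rho^2$ lie in the three distinct $\la\alpha\ra$-cosets incident with $R$. The loose-hyperforest ordering gives one of two cases. If $R$ is the first hyperedge of its component, its three vertices are isolated in $F_j$. If $R$ is a later hyperedge, exactly one of its vertices lies in the component already being built and the other two are isolated in $F_j$. In either case the three incident vertices lie in three distinct components of $F_j$. By the induction hypothesis the three sources therefore lie in three distinct cycles of $s_j$.

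One point needs care: the sources of $R$ have not yet been switched, so $s_j$ still sends each of them by $\alpha$. This holds because hyperedges are disjoint as sets of group elements, and Lemma~\ref{lem:switch} requires exactly this. Lemma~\ref{lem:switch} then merges the three cycles into one and leaves all other cycles unchanged. This matches the effect on $F_{j+1}$, which merges the three components into one. So the hypothesis holds for $j+1$.

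At the end, $F$ has $c$ components, so the final successor map is a cycle cover with exactly $c$ cycles. Every arc $g\to s(g)$ used lies in $\{g\alpha,g\beta\}$, so this is a directed cycle cover of $\operatorname{Cay}(A_n,\{\alpha,\beta\})$. The element description follows directly from the invariant.

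The only delicate step is confirming that the three sources lie in distinct cycles at the moment of switching. This rests on two facts. First, loose-hypertree growth always attaches a hyperedge with two new vertices, so no two of its vertices share a component. Second, Lemma~\ref{lem:trivial-intersection} guarantees that the three elements of $R$ really lie in the three distinct vertices of the hyperedge.
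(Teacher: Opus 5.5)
Your proof is correct and follows the paper's argument: switch the hyperedges in loose-hypertree order, note that each new hyperedge meets three distinct cycles, and merge them with Lemma~\ref{lem:switch}. Your explicit invariant (the cycles of $s_j$ correspond to the components of $F_j$) and your check that the sources of each new hyperedge still carry $\alpha$-successors make the paper's brief sketch fully rigorous.
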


\begin{proof}
Process each nontrivial component in a loose-hypertree order. The first
hyperedge meets three distinct $\alpha$-cycles and merges them by
Lemma~\ref{lem:switch}. Each later hyperedge meets one previously merged
cycle and two untouched $\alpha$-cycles, so another switch merges those
three cycles. Different components use disjoint vertices, and isolated
vertices remain single $\alpha$-cycles.
\end{proof}

\begin{lemma}\label{lem:splice}
Suppose $F$ is a spanning loose hyperforest of $\mathcal H_n$ with two
components, and let $s$ be the two-cycle successor permutation obtained in
Proposition~\ref{prop:forest-cover}. Two successors can be exchanged so that
the resulting permutation has one cycle and exactly one transition that is
not an arc of $\operatorname{Cay}(A_n,\{\alpha,\beta\})$.
\end{lemma}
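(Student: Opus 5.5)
The plan is to find a hyperedge of $\mathcal H_n$ that joins the two components of $F$, and to splice the two cycles at one of its group elements. Let the components of $F$ have vertex sets $V_1$ and $V_2$, so that they partition $\mathcal A_n$. By Proposition~\ref{prop:forest-cover}, the elements lying in cosets of $V_1$ form one directed cycle $C_1$ of $s$, and those of $V_2$ form the other cycle $C_2$. By Lemma~\ref{lem:H-connected}, some hyperedge $R=x\la\rho\ra$ has incident vertices in both $V_1$ and $V_2$. This $R$ cannot be a hyperedge of $F$, since the hyperedges of $F$ lie inside a single component. Hence every element of $R$ still uses its $\alpha$-successor under $s$. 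Because $R$ meets both components, we may relabel $x$ within $R$ so that $x\in C_1$ and $x\rho\in C_2$.

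Now exchange two successors. Put $y=x\rho$. Under $s$ we have $s(x)=x\alpha$ and $s(y)=y\alpha=x\rho\alpha$. Define $s'$ by $s'(x)=s(y)$, $s'(y)=s(x)$, and $s'=s$ elsewhere. This is a transposition of targets between two elements lying in different cycles, so $s'$ is a permutation whose cycle structure merges $C_1$ and $C_2$ into a single cycle. This is the standard fact that composing with a transposition joining two cycles reduces the cycle count by one.

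Next, check which of the new transitions are arcs. For $x$, the new successor is $x\rho\alpha$. By \eqref{eq:relations}, $\rho\alpha=\rho\rho\beta=\rho^2\beta$, and this is generally neither $\alpha$ nor $\beta$. For $y=x\rho$, the new successor is $x\alpha=x\rho\beta=y\beta$, since $\alpha=\rho\beta$. So $y\to y\beta$ is a genuine arc of $\operatorname{Cay}(A_n,\{\alpha,\beta\})$. It remains to confirm that $x\to x\rho\alpha$ is not an arc, that is, that $\rho\alpha\notin\{\alpha,\beta\}$. The condition $\rho\alpha=\alpha$ forces $\rho=1$. The condition $\rho\alpha=\beta=\rho^{-1}\alpha$ forces $\rho^2=1$. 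Both are impossible, since $\rho$ has order $3$. Every other transition is unchanged from $s$, and $s$ uses only $\alpha$- and $\beta$-arcs. Hence $s'$ has exactly one invalid transition, as claimed.

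The step needing the most care is the choice of crossing elements within $R$ with the right orientation, namely $x\in C_1$ and $x\rho\in C_2$. We need the pair $(x,x\rho)$, not $(x,x\rho^2)$, so that the identity $\alpha=\rho\beta$ turns one of the exchanged successors into a $\beta$-arc. If the crossing pair initially comes as $x$ and $x\rho^2$, we replace $x$ by $x\rho^2$, whose $\rho$-successor is $x\rho^3=x$. In every case some consecutive pair in the cyclic order $x,x\rho,x\rho^2$ crosses components, because not all three elements lie in the same $C_i$. Choosing that consecutive pair resolves the issue.
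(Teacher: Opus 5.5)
Your proof is correct and follows essentially the same route as the paper. You take a crossing $\la\rho\ra$-coset $R$ (from Lemma~\ref{lem:H-connected}), which still carries its $\alpha$-successors, exchange the targets of $x$ and $x\rho$ after a cyclic renaming, and verify that $x\rho\mapsto x\alpha=(x\rho)\beta$ is a $\beta$-arc while $x\mapsto x\rho\alpha$ avoids $\{x\alpha,x\beta\}$ because $\rho$ has order $3$. Your orientation worry in the last paragraph is harmless: any two distinct elements of a three-element $\rho$-orbit are consecutive in one direction, and this is exactly the paper's ``cyclic renaming.''
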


\begin{proof}
By Lemma~\ref{lem:H-connected}, some hyperedge
$R=x\la\rho\ra$ is incident with both components of $F$. It does not belong
to $F$; moreover, none of its sources was switched because the
$\la\rho\ra$-cosets partition $A_n$. All three retain their
$\alpha$-successors.

Choose two elements in different directed cycles and, after cyclically
renaming $R$, take them to be $x$ and $x\rho$. Exchanging their targets
merges the two cycles. The transitions on $R$ are now
\[
 x\longmapsto x\rho\alpha,\qquad
 x\rho\longmapsto x\alpha,\qquad
 x\rho^2\longmapsto x\rho^2\alpha.
\]
The second is a $\beta$-transition because
$(x\rho)\beta=x\rho\rho^{-1}\alpha=x\alpha$, and the third remains an
$\alpha$-transition. The allowed targets from $x$ are $x\alpha$ and
$x\beta=x\rho^2\alpha$, so the first transition is the unique invalid one.
\end{proof}
Apply Proposition~\ref{prop:forest-cover} and Lemma~\ref{lem:splice}. Delete
the unique invalid transition from the resulting cycle and use the
isomorphism \eqref{eq:cayley-isomorphism}.
We obtain the following theorem.
\begin{theorem}\label{thm:reduction}
If $\mathcal H_n$ contains a spanning loose hyperforest with two components,
then $P(n,n-2)$ contains a directed Hamiltonian path.
\end{theorem}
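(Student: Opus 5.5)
The plan is to chain Proposition~\ref{prop:forest-cover}, Lemma~\ref{lem:splice} and Lemma~\ref{lem:cayley-isomorphism}. Let $F$ be a spanning loose hyperforest of $\mathcal H_n$ with two components. Switching every hyperedge of $F$, Proposition~\ref{prop:forest-cover} gives a successor permutation $s$ on $A_n$. Every transition of $s$ is either $g\mapsto g\alpha$ or $g\mapsto g\beta$, and $s$ has exactly two cycles. These cycles consist of the elements lying in the $\la\alpha\ra$-cosets of the two components.

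Next I apply Lemma~\ref{lem:splice}. This yields a permutation $s'$ of $A_n$ with a single cycle. All transitions of $s'$ are arcs of $\operatorname{Cay}(A_n,\{\alpha,\beta\})$ except one, namely $x\mapsto x\rho\alpha$. The step that needs checking is that $s'$ really is one cycle. The sources $x$ and $x\rho$ were chosen in different cycles of $s$, and exchanging the successors of two elements in distinct cycles of a permutation joins those two cycles into one. I also need that $x\rho\alpha\neq x\beta$ and $x\rho\alpha\neq x\alpha$, which makes this transition genuinely invalid. Both follow from Lemma~\ref{lem:trivial-intersection}, since $\rho,\rho^2\notin\la\alpha\ra$, together with $x\beta=x\rho^2\alpha$. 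In any case, invalidity of that transition does not matter for the conclusion.

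Deleting the transition $x\mapsto x\rho\alpha$ from the Hamiltonian cycle of $s'$ leaves a directed path. This path visits all $|A_n|=n!/2$ elements, starts at $x\rho\alpha$, ends at $x$, and uses only arcs $g\to g\alpha$ or $g\to g\beta$. It is therefore a Hamiltonian path in $\operatorname{Cay}(A_n,\{\alpha,\beta\})$. Pulling it back along the isomorphism \eqref{eq:cayley-isomorphism} gives a directed Hamiltonian path in $P(n,n-2)$.

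I do not expect a real obstacle here. The only points requiring care are already handled in Lemma~\ref{lem:splice}. First, a crossing hyperedge exists because $\mathcal H_n$ is connected (Lemma~\ref{lem:H-connected}) while $F$ has two components. Second, that hyperedge's sources keep their $\alpha$-successors, since the $\la\rho\ra$-cosets are disjoint.
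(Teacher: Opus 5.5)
Your proposal is correct and follows essentially the same route as the paper: switch the hyperedges of $F$ via Proposition~\ref{prop:forest-cover} to get a two-cycle cover, splice it into one cycle with Lemma~\ref{lem:splice}, delete the single invalid transition $x\mapsto x\rho\alpha$, and transport the resulting Hamiltonian path through the isomorphism \eqref{eq:cayley-isomorphism}. You are also right that the invalidity of the deleted transition does not matter for the conclusion; in any case it needs only $\rho\neq 1$, since $x\beta=x\rho^2\alpha$.
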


A $2$-tree is obtained from a triangle by repeatedly adding a new vertex
adjacent to the two ends of an existing edge. Equivalently, every $2$-tree
of order greater than three has a simplicial vertex of degree two whose
deletion is again a $2$-tree. A $2$-tree on $m$ vertices has $m-2$
triangles.

Let $X$ and $Y$ be linearly ordered sets of cardinality $m$, and let
$\Omega$ be either parity class of bijections from $X$ to $Y$. Let $T$ be a
$2$-tree on $X$. For every triangle $Q$ of $T$, choose one of the two
$3$-cycles supported by $Q$ and call it $\tau_Q$.

\begin{definition}\label{def:permutation-hypergraph}
The permutation hypergraph $K(T,\Omega)$ has vertex set $\Omega$ and
hyperedges
\begin{equation}\label{eq:permutation-edge}
 E_Q(\pi)=\{\pi,\pi\tau_Q,\pi\tau_Q^2\}
\end{equation}
for every triangle $Q$ of $T$ and every $\pi\in\Omega$.
\end{definition}

The orientation of $\tau_Q$ is immaterial, and all three vertices in
\eqref{eq:permutation-edge} lie in $\Omega$ because a $3$-cycle is even.

\begin{theorem}\label{thm:root-deleted}
Let $m\geq4$, let $T$ be a $2$-tree on $X$, let $\Omega$ be a parity class
of bijections $X\to Y$, and let $r\in\Omega$. Then $K(T,\Omega)-r$ contains
a spanning loose hypertree. Equivalently, $K(T,\Omega)$ contains a spanning
loose hyperforest whose components are the isolated vertex $r$ and a loose
hypertree on $\Omega\setminus\{r\}$.
\end{theorem}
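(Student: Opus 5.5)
We argue by induction on $m$, peeling off a simplicial vertex of the $2$-tree. Throughout, a loose hypertree on $N$ vertices has $(N-1)/2$ hyperedges, so $|\Omega|-1=m!/2-1$ must be even. This holds for $m\geq4$, and it is the reason the root is deleted.

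\textbf{Base case $m=4$.} Here $T$ is two triangles $Q_1=\{a,b,c\}$ and $Q_2=\{b,c,d\}$ sharing the edge $bc$. Then $|\Omega|=12$, and we need a loose hypertree on $11$ vertices with $5$ hyperedges. The hyperedges of $K(T,\Omega)$ are the left-coset-type triples $\pi\la\tau_{Q_i}\ra$. For $i=1$ these partition $\Omega$ into four triples, and likewise for $i=2$. The incidence structure is the Cayley-type graph of $A_4$ with respect to two $3$-cycles. I would exhibit the hypertree explicitly. After identifying $\Omega$ with $A_4$ via a fixed $\pi_0$, choose three $Q_1$-triples and two $Q_2$-triples avoiding $r$ whose incidence graph is a tree. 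This is a finite check, and it is symmetric in $r$ because left translation acts transitively.

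\textbf{Inductive step.} Let $m\geq5$ and let $z$ be a simplicial vertex of $T$ of degree two, with neighbours $u,w$ and triangle $Q_0=\{u,w,z\}$. Put $T'=T-z$. Partition $\Omega$ into $m$ layers $\Omega_y=\{\pi:\pi(z)=y\}$ for $y\in Y$. Each layer is a parity class of bijections $X\setminus\{z\}\to Y\setminus\{y\}$. Since the triangles of $T'$ do not involve $z$, the hypergraph $K(T',\Omega_y)$ embeds into $K(T,\Omega)$ on that layer. Let $y_0=r(z)$.

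By induction, applied to the layer of $r$ with root $r$, we get a loose hypertree on $\Omega_{y_0}\setminus\{r\}$. For every other layer $\Omega_y$, choose a root $r_y$ and obtain by induction a loose hypertree $F_y$ on $\Omega_y\setminus\{r_y\}$. The leftover roots $r_y$ are then attached using $Q_0$-hyperedges. Each $E_{Q_0}(\pi)$ meets three distinct layers, namely those with values $\pi(u),\pi(w),\pi(z)$ at $z$, in one vertex each. The freedom in choosing the roots $r_y$ is what makes the gluing possible.

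The gluing is done as follows. Group the $m-1$ layers $y\neq y_0$ into pairs, plus possibly an odd one out. Use $m-1$ roots and the hypertrees plus the $r$-layer tree, and add hyperedges of type $Q_0$ each joining one existing component through a non-root vertex to two fresh roots $r_y,r_{y'}$. Each such hyperedge merges three components, and the counts work: the $m-1$ roots must be absorbed two at a time, each time merging two layer-trees as well.

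A cleaner accounting is to treat the $m$ layer components, after removing roots, as hypergraph vertices. Use $Q_0$-hyperedges $\{\pi,\pi\tau,\pi\tau^2\}$ with $\pi$ a root and the other two elements non-roots in distinct layers. This makes a tree on layers, and the roots are then absorbed. I would aim for a chain: choose roots $r_y$ so that $r_y\tau_{Q_0}$ and $r_y\tau_{Q_0}^2$ are non-roots lying in layers $\tau$ sends $y$ to. If $m-1$ is odd, the parity must be handled by one hyperedge containing two roots.

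\textbf{Main obstacle.} The main obstacle is arranging the root choices so that the layer-level structure is a tree, with no cycles and each $Q_0$-edge meeting three distinct current components. The number of vertices to merge is $m$ layer-trees plus $m-1$ roots. Each hyperedge reduces the component count by $2$, so we need $(2m-2)/2=m-1$ hyperedges of type $Q_0$.

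I would try the following scheme. Order $Y\setminus\{y_0\}$ as $y_1,\dots,y_{m-1}$. For each $j$, pick the root $r_{y_j}$ and use the hyperedge through it that connects the layers with values $y_j$, $y_{j-1}$, and a third value already in the tree. The two-vertex choice of $\pi(u),\pi(w)$ gives enough freedom since $m\geq5$. I would also verify that the third incidence lands on a non-root vertex, which is ensured because each layer has $(m-1)!/2\geq12$ elements and only one root.
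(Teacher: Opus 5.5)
\textbf{There is a genuine gap: the gluing step is not carried out.} Your overall architecture is the paper's. You split $\Omega$ into layers $\Omega_y=\{\pi:\pi(z)=y\}$, apply induction inside each layer with chosen roots $r_y$, and then glue with $m-1$ edges of type $Q_0$, each containing one root and meeting two distinct layer trees. But the step you yourself call the main obstacle is left undone, and the scheme you sketch for it fails as stated.

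\emph{The linear chain is off by one.} Suppose the edge through $r_{y_j}$ attaches $\{r_{y_j}\}$ and the tree of layer $y_{j-1}$ to a layer already in the tree. Then for $j=1$ no attached layer exists other than $y_0$ itself. Moreover, after $j=m-1$ the tree of layer $y_{m-1}$ has never been attached. The chain has to be closed up cyclically so that this tree is met at the wrap-around. This is why the paper uses a cyclic permutation $\sigma$ of $Y\setminus\{y_0\}$.

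\emph{The non-root verification is not an argument.} You justify that the two other incidences avoid roots by noting that each layer has $(m-1)!/2\geq 12$ elements and only one root. Layer size is irrelevant here. Once you prescribe the three layers met by the $Q_0$-edge through $r_{y_j}$, i.e.\ the values $r_{y_j}(z),r_{y_j}(u),r_{y_j}(w)$, only $(m-3)!/2$ choices of $r_{y_j}$ remain in $\Omega$ for each orientation. That is exactly one choice when $m=5$. Whether $r_{y_j}\tau^{\pm1}$ equals $r$ or another root is then forced by the pattern of layer assignments. The constraints also run both ways around the cycle: the fixed root $r$ and earlier roots constrain later ones, and the closing edge ties the last root to the first.

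\textbf{The missing idea is a structural choice of roots that rules out these coincidences.} With $\tau=(z\ u\ w)$ and hub $y_0=r(z)$, the paper takes each root $r_j$ ($j\neq y_0$) with
\[
r_j(z)=j,\qquad r_j(u)=y_0,\qquad r_j(w)=\sigma(j),
\]
fixing the parity on the remaining $m-3\geq 2$ points.
\begin{itemize}
\item Then $r_j\tau$ lies in the $r$-layer and $r_j\tau^2$ lies in layer $\sigma(j)$.
\item $r_j\tau^2$ is never a root, because $(r_j\tau^2)(u)=r_j(z)=j\neq y_0=r_{\sigma(j)}(u)$.
\item $r_j\tau\neq r$ as long as $\sigma(r(w))\neq r(u)$. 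This is a single forbidden adjacency, which a cyclic order on $m-1\geq 4$ symbols can avoid.
\end{itemize}
Adding these edges in the cyclic order of $\sigma$ merges three distinct components each time.

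\textbf{Side remarks to correct.} Your remarks about grouping roots in pairs, and about a parity problem when $m-1$ is odd, should be dropped. In a working scheme every gluing edge contains exactly one root, and no parity condition arises. Also, an edge containing two roots meets only one layer tree, contrary to your claim that it merges two.

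Two minor points. The base case is asserted rather than exhibited; the paper lists five explicit edges. The divisibility remark should say that $m!/2-1$ is odd, not even.
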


\begin{proof}
We use induction on $m$. For $m=4$, relabeling $X$ and $Y$ and
left-composing all bijections with $r^{-1}$ reduces the problem to
$X=Y=[4]$, $\Omega=A_4$, $r=1234$ in one-line notation, and triangles
$\{1,2,3\}$ and $\{2,3,4\}$. The following five hyperedges belong to
$K(T,A_4)$:
\[
\begin{array}{ll}
 E_1=\{1342,3412,4132\},&
 E_2=\{1423,2143,4213\},\\
 E_3=\{2143,2314,2431\},&
 E_4=\{2431,3241,4321\},\\
 E_5=\{3124,3241,3412\}.&
\end{array}
\]
In the order $E_2,E_3,E_4,E_5,E_1$, each edge after the first meets the
preceding union in one vertex. Their union is $A_4\setminus\{1234\}$, so
they form the required loose hypertree.

Let $m\geq5$, and choose a simplicial vertex $x$ of $T$ such that $T-x$ is a
$2$-tree. Write $\{x,a,b\}$ for the unique triangle containing $x$ and take
its generator to be
$
 \tau=(x\ a\ b).
$
For $j\in Y$, define the layer
$
 L_j=\{\pi\in\Omega:\pi(x)=j\}.
$
Every triangle generator other than $\tau$ fixes $x$. After restriction to
$X\setminus\{x\}$, the hypergraph induced on $L_j$ is a permutation
hypergraph associated with $T-x$ and one parity class of bijections
$X\setminus\{x\}\to Y\setminus\{j\}$.

Put $c=r(x)$ and $J=Y\setminus\{c\}$. Choose a cyclic permutation $\sigma$
of $J$ such that
\begin{equation}\label{eq:sigma-condition}
 \sigma(r(b))\neq r(a).
\end{equation}
This is possible because $|J|=m-1\geq4$: a cyclic order can avoid one
prescribed directed adjacency. Set $r_c=r$. For every $j\in J$, choose
$r_j\in L_j$ satisfying
\begin{equation}\label{eq:rj}
 r_j(x)=j,\qquad r_j(a)=c,\qquad r_j(b)=\sigma(j).
\end{equation}
The three prescribed images are distinct. At least two positions and two
values remain, so the parity can be adjusted by interchanging two unused
images. Hence $r_j$ can always be chosen in $\Omega$.

By induction, each layer $L_j$ contains a loose hypertree $B_j$ spanning
$L_j\setminus\{r_j\}$. Initially that layer contributes the two components
$B_j$ and $R_j=\{r_j\}$. For $j\in J$, add
\begin{equation}\label{eq:Fj}
 F_j=\{r_j,r_j\tau,r_j\tau^2\}.
\end{equation}
The conditions in \eqref{eq:rj} imply
$
 r_j\tau\in L_c,$ and $ r_j\tau^2\in L_{\sigma(j)}.
$
Moreover, $r_j\tau\neq r_c$: equality would give
$j=r(b)$ and $\sigma(j)=r(a)$, contrary to
\eqref{eq:sigma-condition}. Also $r_j\tau^2\neq r_{\sigma(j)}$, since
$
 (r_j\tau^2)(a)=r_j(x)=j\neq c=r_{\sigma(j)}(a).
$
Thus $F_j$ meets the three components $R_j$, $B_c$, and
$B_{\sigma(j)}$.

Write the cycle of $\sigma$ as $(j_1\,j_2\,\cdots\,j_q)$. Add
$F_{j_1},F_{j_2},\ldots,F_{j_q}$ in this order. Before $F_{j_t}$ is
added, the growing component containing $B_c$ also contains
$B_{j_2},\ldots,B_{j_t}$ and $R_{j_1},\ldots,R_{j_{t-1}}$. The singleton
$R_{j_t}$ and the hypertree $B_{j_{t+1}}$, with indices read cyclically, are
still separate. Hence each new hyperedge joins three distinct
incidence-tree components and creates no incidence cycle. At the end every
vertex except $r_c=r$ belongs to one loose hypertree, while $r$ remains
isolated.
\end{proof}

We also require a form in which one complete layer is omitted.

\begin{corollary}\label{cor:layer-deletion}
Retain the notation of Theorem~\ref{thm:root-deleted}, let $m\geq5$, suppose
$x$ is simplicial in $T$, and fix $s\in Y$. The hypergraph induced on
$
 \Omega\setminus L_s$ and $
 L_s=\{\pi\in\Omega:\pi(x)=s\},
$
contains a spanning loose hyperforest with two components.
\end{corollary}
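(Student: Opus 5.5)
The plan is to rerun the layer argument from the proof of Theorem~\ref{thm:root-deleted}, discarding the layer $L_s$ and connecting the remaining $m-1$ layers. The target is a forest consisting of one loose hypertree together with one isolated root.

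\textbf{Setup.} Let $\{x,a,b\}$ be the unique triangle of $T$ containing the simplicial vertex $x$, with generator $\tau=(x\ a\ b)$. Every other triangle generator fixes $x$. Hence each layer $L_j$ carries a permutation hypergraph for the $2$-tree $T-x$ on $m-1\geq4$ points, with one parity class of bijections $X\setminus\{x\}\to Y\setminus\{j\}$.
\begin{itemize}
\item Fix some $c\in Y\setminus\{s\}$ and put $J'=Y\setminus\{s,c\}$, so $|J'|=m-2\geq3$.
\item Choose a cyclic permutation $\sigma$ of $J'$.
\item Choose $r_c\in L_c$ with $r_c(a)=s$.
\item For $j\in J'$, choose $r_j\in L_j$ with $r_j(x)=j$, $r_j(a)=c$, $r_j(b)=\sigma(j)$.
\end{itemize}
The prescribed images are distinct, and at least two positions and values remain free, so parity can be fixed by a swap exactly as in \eqref{eq:rj}. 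By Theorem~\ref{thm:root-deleted} applied to each layer (valid since $m-1\geq4$), every $L_j$ with $j\neq s$ contains a loose hypertree $B_j$ spanning $L_j\setminus\{r_j\}$. At this point the layers contribute the components $B_j$ and $R_j=\{r_j\}$ for $j\in Y\setminus\{s\}$.

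\textbf{Connecting edges.} For $j\in J'$ add $F_j=\{r_j,r_j\tau,r_j\tau^2\}$. Since $(r_j\tau)(x)=r_j(a)=c$ and $(r_j\tau^2)(x)=r_j(b)=\sigma(j)\neq s$, we get $r_j\tau\in L_c$ and $r_j\tau^2\in L_{\sigma(j)}$. So $F_j$ avoids $L_s$ and is a hyperedge of the induced hypergraph. We also need two non-coincidences.
\begin{itemize}
\item $r_j\tau\neq r_c$, because $(r_j\tau)(a)=r_j(b)=\sigma(j)\neq s=r_c(a)$. This is why $r_c(a)=s$ was imposed; it replaces condition \eqref{eq:sigma-condition}.
\item $r_j\tau^2\neq r_{\sigma(j)}$, because $(r_j\tau^2)(a)=j\neq c=r_{\sigma(j)}(a)$.
\end{itemize}
Hence $F_j$ meets the three distinct components $R_j$, $B_c$ and $B_{\sigma(j)}$.

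\textbf{Ordering and conclusion.} Write $\sigma=(j_1\,\cdots\,j_q)$ and add $F_{j_1},\dots,F_{j_q}$ in this order. The bookkeeping is identical to the end of the proof of Theorem~\ref{thm:root-deleted}. Before $F_{j_t}$ is added, the growing component contains $B_c,B_{j_2},\dots,B_{j_t}$ and $R_{j_1},\dots,R_{j_{t-1}}$, while $R_{j_t}$ and $B_{j_{t+1}}$ (indices cyclic) are still separate. Each step therefore merges three distinct incidence-tree components without creating a cycle. At the end every vertex of $\Omega\setminus L_s$ other than $r_c$ lies in one loose hypertree, and $r_c$ is isolated: two components, as claimed.

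The main obstacle is the collision $r_j\tau=r_c$, which is the only point where the choice of $r_c$ matters. Imposing $r_c(a)=s$ resolves it at no cost because $s$ is excluded from $J'$. Beyond that, one only has to check that $J'$ is large enough to support the cycle $\sigma$; $|J'|\geq3$ suffices.
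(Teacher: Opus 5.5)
Your proof is correct and follows the paper's argument essentially verbatim. It uses the same hub layer $L_c$, the same cyclic permutation $\sigma$ of $Y\setminus\{s,c\}$, the same representatives $r_j$, and the same connecting hyperedges $F_j$ added in cyclic order. The only difference is how you rule out the collision $r_j\tau=r_c$. The paper counts: it picks $r_c\in L_c$ outside the $m-2$ points $r_j\tau$, which is possible because $|L_c|=(m-1)!/2>m-2$. You impose $r_c(a)=s$, which excludes every $r_j\tau$ at once since $(r_j\tau)(a)=\sigma(j)\neq s$. That is a slightly cleaner and equally valid choice.
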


\begin{proof}
Let $\{x,a,b\}$ be the unique triangle containing $x$ and put
$\tau=(x\ a\ b)$. Set $I=Y\setminus\{s\}$. Choose $c\in I$ and a cyclic
permutation $\sigma$ of $I\setminus\{c\}$. For every
$j\in I\setminus\{c\}$ choose $r_j\in L_j$ satisfying
$
 r_j(x)=j,$ and $ r_j(a)=c $ and $ r_j(b)=\sigma(j).
$
As above, the parity can be chosen as required. Choose
$
 r_c\in L_c\setminus
 \{r_j\tau:j\in I\setminus\{c\}\}.
$
This is possible because $|L_c|=(m-1)!/2>m-2$.

Apply Theorem~\ref{thm:root-deleted} within each layer $L_j$, $j\in I$, to
obtain a loose hypertree $B_j$ spanning $L_j\setminus\{r_j\}$. For each
$j\in I\setminus\{c\}$, the edge $F_j$ in \eqref{eq:Fj} meets the singleton
$R_j$, the hypertree $B_c$, and the hypertree $B_{\sigma(j)}$. The choice of
$r_c$ ensures $r_j\tau\neq r_c$, and
$
 (r_j\tau^2)(a)=j\neq c=r_{\sigma(j)}(a).
$
Adding the $F_j$ in the cyclic order of $\sigma$ joins three incidence-tree
components at every step. The final components are the isolated vertex
$r_c$ and one loose hypertree containing every other vertex of
$\Omega\setminus L_s$.
\end{proof}

\begin{remark}\label{rem:deterministic}
The proof is constructive. Order bijections lexicographically. At every
stage choose the least admissible simplicial vertex, cyclic permutation, and
representatives $r_j$; in the base case, use the hyperedges in the order
$
E_2,E_3,E_4,E_5,E_1.
$ These choices make the recursion deterministic.
\end{remark}

\section{Proof of the main theorem}\label{sec:odd}

Assume that $n\geq5$ is odd and put $m=n-1$. Then
$\alpha=(1\ 2\ \cdots\ n)$. Every coset in $\mathcal A_n$ contains a unique
representative $g$ satisfying $g(n)=n$, since the value
$(g\alpha^t)(n)$ runs once through $[n]$. This representative is even, and
restriction to $[m]$ gives a bijection
\begin{equation}\label{eq:odd-vertex-map}
 \Phi:A_m\longrightarrow\mathcal A_n,\qquad
 \Phi(g)=g\la\alpha\ra,
\end{equation}
where $g\in A_m$ is regarded as fixing $n$.

For $1\leq i\leq m-2$, put
$
 \tau_i=(i\ i+1\ i+2).
$
A direct conjugation gives
\begin{equation}\label{eq:odd-conjugate}
 \alpha^{i+1}\rho\alpha^{-(i+1)}=\tau_i^{-1}.
\end{equation}
For $g\in A_m$, the coset $g\alpha^{i+1}\la\rho\ra$ is incident with the
three vertices whose standard representatives are $g,g\tau_i,g\tau_i^2$.

We verify that these incidences give an embedded permutation hypergraph.
For a $\tau_i$-orbit
$
 E_i(g)=\{g,g\tau_i,g\tau_i^2\},
$
define
$
 \Psi(E_i(g))=g\alpha^{i+1}\la\rho\ra.
$
This is well defined because
$g\tau_i^t\alpha^{i+1}=g\alpha^{i+1}\rho^{-t}$. Suppose
$
 g\alpha^{i+1}\la\rho\ra
   =h\alpha^{j+1}\la\rho\ra
$
for $g,h\in A_m$ and $1\leq i,j\leq m-2$. For some
$t\in\{0,1,2\}$,
$
 h=g\alpha^{i+1}\rho^t\alpha^{-(j+1)}.
$
Since $g$ and $h$ fix $n$, the permutation
$u=\alpha^{i+1}\rho^t\alpha^{-(j+1)}$ fixes $n$. Now
$
 \alpha^{-(j+1)}(n)=n-j-1\in\{2,\ldots,n-2\},
$
and this point is fixed by $\rho^t$. Thus
$u(n)=\alpha^{i+1}(n-j-1)$, which equals $n$ only if $i=j$.
Equation \eqref{eq:odd-conjugate} then gives $h=g\tau_i^{-t}$, so the two
abstract hyperedges coincide. Hence $\Phi$ and $\Psi$ form an incidence
isomorphism onto their image.

The triangles
\begin{equation}\label{eq:2path}
 \{1,2,3\},\{2,3,4\},\ldots,\{m-2,m-1,m\}
\end{equation}
form a $2$-path. The corresponding $K(T,A_m)$ is a spanning subhypergraph
of $\mathcal H_n$.

\begin{proposition}\label{prop:odd-forest}
If $n\geq5$ is odd, then $\mathcal H_n$ contains a spanning loose
hyperforest with two components.
\end{proposition}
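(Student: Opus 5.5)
We combine the embedding constructed just above with Theorem~\ref{thm:root-deleted}. Let $m=n-1\geq4$ and take $T$ to be the $2$-path on $X=[m]$ whose triangles are listed in \eqref{eq:2path}. For each triangle $\{i,i+1,i+2\}$ we use the generator $\tau_Q=\tau_i=(i\ i+1\ i+2)$. Put $Y=[m]$ and $\Omega=A_m$; this is a parity class of bijections $X\to Y$. Then $K(T,A_m)$ is exactly the abstract hypergraph whose hyperedges are the orbits $E_i(g)=\{g,g\tau_i,g\tau_i^2\}$.

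We have already shown that $\Phi:A_m\to\mathcal A_n$ is a bijection on vertices and that $\Psi$ sends hyperedges of $K(T,A_m)$ injectively to hyperedges of $\mathcal H_n$. Incidences are preserved: the vertex $g\tau_i^t\la\alpha\ra$ meets $\Psi(E_i(g))$ in the element $g\tau_i^t\alpha^{i+1}=g\alpha^{i+1}\rho^{-t}$. Since every hyperedge of $\mathcal H_n$ has exactly three incident vertices by Lemma~\ref{lem:trivial-intersection}, the incidence graph of $K(T,A_m)$ is isomorphic, via $(\Phi,\Psi)$, to the subgraph of the incidence graph of $\mathcal H_n$ induced on $\mathcal A_n$ and $\Psi(\text{edges})$. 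Consequently, any spanning loose hyperforest of $K(T,A_m)$ is carried to a spanning loose hyperforest of $\mathcal H_n$ with the same number of components. This holds because acyclicity and connectivity of the incidence graphs are preserved under this isomorphism, and because $\Phi$ is surjective onto $\mathcal A_n$.

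Now apply Theorem~\ref{thm:root-deleted} with $m\geq4$ and any root $r\in A_m$, for instance $r=1$. It gives a spanning loose hyperforest of $K(T,A_m)$ whose two components are the isolated vertex $r$ and a loose hypertree on $A_m\setminus\{r\}$. The image of this hyperforest under $(\Phi,\Psi)$ is the required spanning loose hyperforest of $\mathcal H_n$ with two components.

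The only step needing care is the transfer of the hypertree property through the embedding. In particular we must check that distinct abstract hyperedges go to distinct cosets, which is the injectivity of $\Psi$ already proved, and that each image hyperedge has no incidences beyond the three images of its own vertices. The second point follows because each $\la\rho\ra$-coset has exactly three elements, which lie in the three distinct cosets $g\tau_i^t\la\alpha\ra$. Everything else is immediate from the earlier results.
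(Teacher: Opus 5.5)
Your proposal is correct and follows the paper's proof exactly. You apply Theorem~\ref{thm:root-deleted} to the $2$-path \eqref{eq:2path} with $\Omega=A_m$ and an arbitrary root, then transfer the root-isolated hyperforest to $\mathcal H_n$ through the incidence isomorphism $(\Phi,\Psi)$; your extra check that each image hyperedge is incident only with $\Phi(g),\Phi(g\tau_i),\Phi(g\tau_i^2)$, via $g\tau_i^t\alpha^{i+1}=g\alpha^{i+1}\rho^{-t}$ and Lemma~\ref{lem:trivial-intersection}, makes explicit what the paper leaves implicit.
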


\begin{proof}
Apply Theorem~\ref{thm:root-deleted} to the $2$-path in
\eqref{eq:2path}, with any prescribed root, and transfer the resulting
root-isolated hyperforest through the incidence isomorphism above.
\end{proof}

Assume that $n\geq6$ is even and put $m=n-1$. Now
$\alpha=(1\ 2\ \cdots\ m)$ fixes $n$, so $g(n)$ is constant on every
coset in $\mathcal A_n$. Partition the vertices into
$
 W=\{g\la\alpha\ra:g(n)=n\}$ and $
 U=\{g\la\alpha\ra:g(n)\neq n\}.
$

Every coset $C\in U$ has a unique representative $g_C$ satisfying
\begin{equation}\label{eq:even-standard}
 g_C(m)=n.
\end{equation}
Indeed, the value $n$ occurs at a unique position in $[m]$, and right
multiplication by a unique power of $\alpha$ moves that position to $m$.

Delete domain position $m$ and value $n$, and relabel the remaining domain
position $n$ as $m$. Define $\bar g\in S_m$ by
\begin{equation}\label{eq:gbar}
 \bar g(i)=g(i)\quad(1\leq i\leq m-1),\qquad
 \bar g(m)=g(n).
\end{equation}
The permutations $\bar g$ form one parity class $\Omega$ of $S_m$. Deleting
position $m$ and value $n$ changes the sign by the constant factor
$(-1)^{m+n}=-1$, and every member of that parity class has a unique even
extension satisfying \eqref{eq:even-standard}. Thus
\begin{equation}\label{eq:even-vertex-map}
 \Phi:U\longrightarrow\Omega,\qquad
 \Phi(C)=\bar g_C,
\end{equation}
is a bijection.

For $1\leq i\leq m-2$, put
$
 \delta_i=\alpha^i\rho\alpha^{-i}=(i+1\ n\ i).
$
The permutation $\delta_i$ fixes $m$, so it preserves the standard
condition \eqref{eq:even-standard}. The hyperedge
$g\alpha^i\la\rho\ra$ is incident with the vertices represented by
$g,g\delta_i,g\delta_i^2$. Under \eqref{eq:gbar}, $\delta_i$ induces a
$3$-cycle on $\{i,i+1,m\}$.

This hyperedge correspondence is injective. If
$
 g\alpha^i\la\rho\ra=h\alpha^j\la\rho\ra
$
for standard representatives $g,h$ and $1\leq i,j\leq m-2$, then
$
 h=g\alpha^i\rho^t\alpha^{-j}
$
for some $t\in\{0,1,2\}$. Since $g(m)=h(m)=n$, the permutation
$u=\alpha^i\rho^t\alpha^{-j}$ fixes $m$. But
$
 \alpha^{-j}(m)=m-j\in\{2,\ldots,m-1\},
$
which is fixed by $\rho^t$. Hence $u(m)=\alpha^i(m-j)=m$ forces $i=j$,
and then $h=g\delta_i^t$. The abstract hyperedges are equal.

It follows that the permutation hypergraph generated by
\begin{equation}\label{eq:fan}
 \{1,2,m\},\{2,3,m\},\ldots,\{m-2,m-1,m\}
\end{equation}
is isomorphic to a spanning subhypergraph of $\mathcal H_n[U]$. These
triangles form a fan $2$-tree, and the vertex $1$ is simplicial. Fix
$s\in[m]$, say $s=1$, and let
\begin{equation}\label{eq:R}
 R=\{C\in U:\bar g_C(1)=s\}.
\end{equation}
By Corollary~\ref{cor:layer-deletion}, the induced hypergraph on
$U\setminus R$ contains a spanning loose hyperforest $F$ with two
components.

Define a bipartite multigraph $B$ with parts $R$ and $W$. Each
$\la\rho\ra$-coset incident with one vertex of $R$ and one vertex of $W$
gives an edge between those vertices. Parallel edges are retained.

\begin{lemma}\label{lem:B-local}
Every edge of $B$ comes from a hyperedge of $\mathcal H_n$ containing one
vertex in each of $R$, $W$, and $U\setminus R$. Every vertex of $R$ has
degree $2$ in $B$.
\end{lemma}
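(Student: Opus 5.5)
The plan is to compute directly how a $\la\rho\ra$-coset distributes its three incident $\la\alpha\ra$-cosets between $W$ and $U$, and then to specialize this to the vertices of $R$.

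\textbf{Locating $n$ in a hyperedge.} Take a hyperedge $x\la\rho\ra$ with incident vertices $x\la\alpha\ra$, $x\rho\la\alpha\ra$ and $x\rho^2\la\alpha\ra$. Since $\alpha$ fixes $n$, membership in $W$ is decided by the value at position $n$. Because $\rho=(1\ n\ m)$ sends $1\mapsto n\mapsto m\mapsto 1$, these three values are
\[
 x(n),\qquad (x\rho)(n)=x(m),\qquad (x\rho^2)(n)=x(1).
\]
These values are distinct, so a hyperedge has at most one vertex in $W$. It has exactly one precisely when $x^{-1}(n)\in\{1,m,n\}$, and then its other two vertices lie in $U$. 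This already gives the first part of the lemma up to one point: the $U$-vertex of an edge of $B$ that is not in $R$ must be shown to lie outside $R$.

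\textbf{Degree count.} Fix $C\in R$ with standard representative $g$. Then $g(m)=n$ and $\bar g(1)=g(1)=s$. The hyperedges incident with $C$ are $g\alpha^t\la\rho\ra$ for $0\le t\le m-1$. They are pairwise distinct by Lemma~\ref{lem:trivial-intersection}, because distinct elements of one $\la\alpha\ra$-coset lie in distinct $\la\rho\ra$-cosets. By the criterion above, such a hyperedge meets $W$ iff $(g\alpha^t)^{-1}(n)=\alpha^{-t}(m)\in\{1,m,n\}$. Since $\alpha$ fixes $n$, the value $n$ is impossible, and this happens exactly for $t=0$ and $t=m-1$. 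Hence $C$ lies in exactly two such hyperedges, each contributing one $W$-vertex, so $\deg_B C=2$ with parallel edges counted.

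\textbf{Third vertex avoids $R$.} This is the step needing care, because it requires computing standard representatives explicitly.

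For $t=0$, the $W$-vertex is $g\rho\la\alpha\ra$, since its position-$n$ value is $g(m)=n$. The remaining vertex is $g\rho^2\la\alpha\ra$, and in it $n$ sits at position $\rho^{-2}(m)=1$. Its standard representative is therefore $h=g\rho^2\alpha$, and
\[
 h(1)=g(\rho^2(2))=g(2)\neq g(1)=s .
\]

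For $t=m-1$, put $x=g\alpha^{-1}$. Then $x(1)=g(m)=n$, so $x\rho^2$ represents the $W$-vertex and $x$ represents $C$. The remaining vertex is $x\rho\la\alpha\ra$, and $x\rho$ already satisfies $(x\rho)(m)=x(1)=n$, so it is the standard representative. Its value at position $1$ is $(x\rho)(1)=x(n)=g(n)$, which differs from $g(1)$.

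In both cases the third vertex lies in $U\setminus R$, since its $\bar{\ }$-value at $1$ is not $s$. Every edge of $B$ arises from one of these two hyperedges at its $R$-endpoint, so both assertions of the lemma follow.
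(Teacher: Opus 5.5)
Your proof is correct and follows essentially the same route as the paper. You show that the only hyperedges at $C$ meeting $W$ are $g\langle\rho\rangle$ and $g\alpha^{m-1}\langle\rho\rangle$, and you identify the standard representatives $g\rho^2\alpha$ and $x\rho$ of the third vertices, whose values at $1$ are $g(2)$ and $g(n)$, hence not $s$; your criterion $x^{-1}(n)\in\{1,m,n\}$ is just a repackaging of the paper's list of $n$-images $g(n),\,g(\alpha^k(m)),\,g(\alpha^k(1))$.
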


\begin{proof}
Let $C=g\la\alpha\ra\in R$, where $g$ is the standard representative.
Then
$
 g(m)=n, g(1)=s $ and $g(n)\neq n.
$
The hyperedges incident with $C$ are represented uniquely by
$g\alpha^k$, $0\leq k\leq m-1$, by
Lemma~\ref{lem:trivial-intersection}. The three incident
$\la\alpha\ra$-cosets have $n$-images
\begin{equation}\label{eq:n-images}
 g(n),\qquad g(\alpha^k(m)),\qquad g(\alpha^k(1)).
\end{equation}
The first is not $n$. Since $g(m)=n$, one of the other two is $n$ exactly
when $k=0$ or $k=m-1$. Hence $C$ is incident with two hyperedges meeting
$W$.

For $k=0$, the $W$-vertex is $g\rho\la\alpha\ra$. The other $U$-vertex is
$g\rho^2\la\alpha\ra$, whose standard representative is $g\rho^2\alpha$;
its first image is
$
 (g\rho^2\alpha)(1)=g(2)\neq g(1)=s.
$
It lies in $U\setminus R$. For $k=m-1$, put $x=g\alpha^{-1}$. The
$W$-vertex is $x\rho^2\la\alpha\ra$, and the other $U$-vertex is
$x\rho\la\alpha\ra$. Here $x\rho$ is standard and
$
 (x\rho)(1)=g(n)\neq g(1)=s.
$
This also lies in $U\setminus R$. The description shows that a counted
hyperedge cannot contain two vertices of $R$.
\end{proof}

\begin{lemma}\label{lem:B-regular}
The bipartite multigraph $B$ is $2$-regular and therefore has a perfect
matching.
\end{lemma}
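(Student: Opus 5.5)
The plan is to prove that $B$ is $2$-regular by computing degrees on both sides directly, as in Lemma~\ref{lem:B-local}, and then to obtain the perfect matching from the standard structure of $2$-regular bipartite multigraphs. Lemma~\ref{lem:B-local} already gives degree $2$ at every vertex of $R$. So the work is the degree count at vertices of $W$, plus the check $|R|=|W|$. Strictly, that check follows from regularity by double counting, but it is a useful sanity check.

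\emph{Counting.} A vertex of $W$ is a coset $g\la\alpha\ra$ with $g(n)=n$. Since $\alpha$ fixes $n$, these cosets partition the copy of $A_m$ fixing $n$, so $|W|=(m!/2)/m=(m-1)!/2$. The vertices of $R$ correspond under $\Phi$ to the layer $\{\pi\in\Omega:\pi(1)=s\}$, which has size $(m-1)!/2$.

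\emph{Degree at $W$.} Let $C=g\la\alpha\ra\in W$. By Lemma~\ref{lem:trivial-intersection}, its $m$ incident hyperedges are $g\alpha^k\la\rho\ra$ for $0\le k\le m-1$, and each has other vertices $g\alpha^k\rho\la\alpha\ra$ and $g\alpha^k\rho^2\la\alpha\ra$.

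The first step is to record the $n$-images of these two vertices. They are $g\alpha^k(m)$ and $g\alpha^k(1)$, and neither equals $n$, because $g$ fixes $n$ and $\alpha^k$ permutes $[m]$. Hence both vertices lie in $U$, and no hyperedge at $C$ has two vertices in $W$.

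The second step is to compute the standard representatives and their first values.
\begin{itemize}
\item For $g\alpha^k\rho^2$, the value $n$ sits at position $\rho(n)=m$. This representative is therefore already standard, and its first value is $g\alpha^k\rho^2(1)=g\alpha^k(m)$, which is $g$ evaluated at $k$ read modulo $m$ in $[m]$.
\item For $g\alpha^k\rho$, the value $n$ sits at position $1$. The standard representative is $g\alpha^k\rho\alpha$, and its first value is $g\alpha^k\rho(2)=g\alpha^k(2)=g(k+2)$, again indices mod $m$.
\end{itemize}
As $k$ ranges over $[m]$, each of these two families of first values runs once through $g([m])=[m]$, and $[m]$ contains $s$. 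So exactly one $k$ puts the $\rho^2$-vertex in $R$, and exactly one $k$ puts the $\rho$-vertex in $R$. These two values of $k$ are distinct, since $g(k)=g(k+2)=s$ would force $k\equiv k+2 \pmod m$, which is impossible for $m\ge5$. In particular, no hyperedge at $C$ contains two vertices of $R$. Consequently $C$ lies on exactly two hyperedges with exactly one vertex in $R$, and each contributes exactly one edge of $B$, so $\deg_B C=2$.

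\emph{Conclusion.} A $2$-regular bipartite multigraph is a disjoint union of even cycles, where a double edge counts as a $2$-cycle. Alternate edges of each cycle form a perfect matching; equivalently, one can cite K\"onig's theorem for regular bipartite multigraphs. The main obstacle is the index bookkeeping in the second step: identifying which power of $\alpha$ standardizes each vertex, and confirming that the two first-value sequences each hit $s$ exactly once at distinct hyperedges. I would verify the conventions $\rho\colon 1\mapsto n\mapsto m\mapsto1$ and $\alpha(m)=1$ carefully before finalizing.
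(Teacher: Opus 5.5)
Your proof is correct, but you reach degree $2$ on $W$ by a different route from the paper. The paper never computes the neighbours of a $W$-vertex. It lets $G_0=\{h\in A_n: h(n)=n,\ h(s)=s\}$ act by left multiplication, which preserves $R$, $W$ and incidence, since standard representatives go to standard representatives. It shows this action is transitive on $W$ using the representatives with $g(m)=s$, and concludes that all $W$-degrees are equal. Then $2|R|=|E(B)|$ together with $|R|=|W|=(m-1)!/2$ forces that common degree to be $2$. So in the paper the equality $|R|=|W|$ is load-bearing, whereas in your argument it is only a consistency check.

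Your direct computation checks out with $\rho=(1\ n\ m)$ and $\alpha(m)=1$. The $\rho^2$-neighbour $g\alpha^k\rho^2$ is already standard with first value $g\alpha^k(m)$. The $\rho$-neighbour has standard representative $g\alpha^k\rho\alpha$ with first value $g\alpha^k(2)$. These hit $s$ at the distinct residues $k\equiv g^{-1}(s)$ and $k\equiv g^{-1}(s)-2$.

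The symmetry argument is shorter and avoids index bookkeeping. Your version is more explicit: it names the two $B$-edges at every $W$-vertex, which would let one trace the cycles of $B$ and the chosen matching concretely, in the spirit of the deterministic construction at the end of the paper. It also independently confirms that no hyperedge through a $W$-vertex contains two vertices of $R$.
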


\begin{proof}
By Lemma~\ref{lem:B-local}, every vertex in $R$ has degree $2$. Let
$
 G_0=\{h\in A_n:h(n)=n,\ h(s)=s\}.
$
Left multiplication by $G_0$ preserves $R$, $W$, and hyperedge incidence,
so it acts by automorphisms of $B$. This action is transitive on $W$.
Indeed, for $C,D\in W$, choose the unique representatives $g\in C$ and
$h\in D$ satisfying $g(m)=h(m)=s$. The even permutation $hg^{-1}$ fixes
$n$ and $s$ and maps $C$ to $D$. Hence all vertices of $W$ have the same
degree.

Moreover,
$
 |R|=\frac{(m-1)!}{2}=|W|.
$
The first equality is the size of a layer of $\Omega$, while
$|W|=|A_m|/m=(m-1)!/2$. Equality of total degrees on the two sides shows
that every vertex in $W$ also has degree $2$. Each component of a finite
$2$-regular bipartite multigraph is an even cycle, with parallel edges
allowed as a $2$-cycle. Alternating edges form a perfect matching.
\end{proof}

\begin{proposition}\label{prop:even-forest}
If $n\geq6$ is even, then $\mathcal H_n$ contains a spanning loose
hyperforest with two components.
\end{proposition}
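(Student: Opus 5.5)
We take the two-component loose hyperforest $F$ on $U\setminus R$ from Corollary~\ref{cor:layer-deletion}, transported to $\mathcal H_n$ by the incidence isomorphism for the fan $2$-tree, together with a perfect matching $M$ of $B$ from Lemma~\ref{lem:B-regular}. The candidate spanning hyperforest is $F\cup M$: each matching edge is replaced by the hyperedge of $\mathcal H_n$ it comes from. By Lemma~\ref{lem:B-local}, such a hyperedge $H_e$ contains exactly one vertex of $R$, one vertex of $W$ and one vertex $u_e$ of $U\setminus R$. Since $U\setminus R$, $R$ and $W$ partition $\mathcal A_n$, the vertex set is covered, and $R$ and $W$ are each covered exactly once by $M$.

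Next we check that the hyperedges used are distinct and that the union stays a loose hyperforest with two components. The hyperedges of $F$ lie inside $U\setminus R$, while each $H_e$ meets $R$, so $F$ and the $H_e$ share no hyperedges. Distinct matching edges are distinct $\la\rho\ra$-cosets, because parallel edges of $B$ were kept as separate hyperedges and $M$ uses at most one edge at each vertex.

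We add the $H_e$ one at a time. Before $H_e$ is added, its $R$-vertex and $W$-vertex are still isolated, since each is matched exactly once. Its third vertex $u_e$ lies in one of the two components of $F$, possibly already enlarged by earlier pendant additions. So $H_e$ meets the current vertex union in exactly one vertex and brings in two new vertices. This is precisely the loose-hypertree extension step, so no incidence cycle is created and the number of components stays two.

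Finally we count, as a sanity check. The final hyperforest has $N=|\mathcal A_n|$ vertices, and the number of hyperedges should be $(N-2)/2$. $F$ has $(|U\setminus R|-2)/2$ hyperedges and $M$ has $|R|=|W|$ of them. Since
\[
\frac{|U|-|R|-2}{2}+|R|=\frac{|U|+|R|-2}{2}=\frac{|U|+|W|-2}{2},
\]
the count is consistent. The main point needing care is that the $U\setminus R$ vertex of each matched hyperedge is genuinely already covered by $F$. This holds because $F$ spans $U\setminus R$, and Lemma~\ref{lem:B-local} guarantees that no matched hyperedge contains two vertices of $R$ or two of $W$. The two components of $F$ then remain the two components of the final hyperforest, and Theorem~\ref{thm:reduction} applies.
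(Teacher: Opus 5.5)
Your proposal is correct and follows the paper's proof essentially verbatim. You take the two-component hyperforest $F$ on $U\setminus R$ from Corollary~\ref{cor:layer-deletion} and add the hyperedges of a perfect matching of $B$ (Lemma~\ref{lem:B-regular}), using Lemma~\ref{lem:B-local} to see that each added hyperedge meets the current vertex set only in its $U\setminus R$ vertex and brings in new vertices in $R$ and $W$, which keeps two components. Your extra checks, distinctness of the added hyperedges and the count $(N-2)/2$, are harmless additions that the paper leaves implicit.
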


\begin{proof}
Start with the two-component hyperforest $F$ spanning $U\setminus R$. Let
$M$ be a perfect matching of $B$. For each edge of $M$, add the corresponding
hyperedge of $\mathcal H_n$. By Lemma~\ref{lem:B-local}, it contains one
previously present vertex of $U\setminus R$ and two new vertices, one in $R$
and one in $W$. The matching covers every vertex of $R\cup W$ exactly once.
Thus the added hyperedges may be processed in any order; each meets the
preceding vertex set in one vertex and introduces two new vertices. Neither
component of $F$ is merged with the other, so the result is spanning and
still has two components.
\end{proof}
If \(n=4\), choose the least hyperedge of \(\mathcal H_4\)
under the fixed ordering of cosets, and leave the remaining
vertex isolated.

If \(n\ge5\) is odd, use the identification in (12), take the identity
as the root, and apply Theorem~4.2 recursively to the \(2\)-path (14).

If \(n\ge6\) is even, use (17), take \(s=1\), apply Corollary~4.3 to
the fan (18), and construct \(B\).

The case $n=4$ can be handled directly.

\begin{lemma}\label{lem:n4}
The hypergraph $\mathcal H_4$ has a spanning loose hyperforest with two
components.
\end{lemma}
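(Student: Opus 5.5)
For $n=4$ the vertex set $\mathcal A_4$ is small. Here $\alpha=(1\,2\,3)$ has order $3$, so by \eqref{eq:qn} there are $q_4=|A_4|/3=4$ vertices. There are also $|A_4|/3=4$ hyperedges, one for each coset in $\mathcal R_4$. A spanning loose hyperforest with $N=4$ vertices and $c=2$ components has $(N-c)/2=1$ hyperedge. So the plan is simply to take a single hyperedge of $\mathcal H_4$ together with the one vertex it misses. Any such choice works.

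The key steps, in order, are as follows. First, observe that $\mathcal H_4$ has at least one hyperedge, since $\mathcal R_4$ is nonempty. Second, apply Lemma~\ref{lem:trivial-intersection}, which holds for every $n\geq4$ and in particular for $n=4$: each hyperedge $x\la\rho\ra$ meets three \emph{distinct} cosets in $\mathcal A_4$. Its incidence graph is therefore a star with three leaves, so it is a loose hypertree on three vertices. Third, exactly one of the four vertices is left uncovered, and it forms an isolated loose hypertree. The union is a vertex-disjoint union of two loose hypertrees covering all of $\mathcal A_4$, which is the required spanning loose hyperforest with two components.

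The only point needing any care is the count $|\mathcal A_4|=4$. Here $\alpha=(1\,2\,3)$ has order $3$ and $|A_4|=12$, so the index is $12/3=4$, in agreement with $n(n-2)!/2=4$. No genuine obstacle arises; in particular, the recursive machinery of Theorem~\ref{thm:root-deleted} is unnecessary in this case. Combined with Theorem~\ref{thm:reduction}, this yields a Hamiltonian path in $P(4,2)$, recovering Isaak's observation.
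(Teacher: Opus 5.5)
Your proposal is correct and is essentially the paper's own proof. $\mathcal H_4$ has $12/3=4$ vertices, and Lemma~\ref{lem:trivial-intersection} makes any single hyperedge a loose hypertree on three distinct vertices, with the fourth vertex left as the isolated second component.
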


\begin{proof}
Here $|A_4|=12$ and $|\la\alpha\ra|=3$, so $\mathcal H_4$ has four
vertices. By Lemma~\ref{lem:trivial-intersection}, every hyperedge contains
three distinct vertices. Take one hyperedge and leave the fourth vertex
isolated.
\end{proof}

\begin{proof}[Proof of Theorem~\ref{thm:main}]
For $n=4$, combine Lemma~\ref{lem:n4} with
Theorem~\ref{thm:reduction}. For odd $n\geq5$, use
Proposition~\ref{prop:odd-forest}; for even $n\geq6$, use
Proposition~\ref{prop:even-forest}. These cases cover every $n\geq4$.
\end{proof}

Together with Proposition~\ref{prop:parity}, the theorem gives the exact
cycle-path distinction.

\begin{corollary}\label{cor:cycle-path}
For every $n\geq4$, the digraph $P(n,n-2)$ has a directed Hamiltonian path
but no directed Hamiltonian cycle.
\end{corollary}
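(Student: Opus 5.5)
The corollary combines two results already established. For the path half, I will cite Theorem~\ref{thm:main}. Concretely, Lemma~\ref{lem:n4}, Proposition~\ref{prop:odd-forest} and Proposition~\ref{prop:even-forest} provide a spanning loose hyperforest of $\mathcal H_n$ with two components for $n=4$, for odd $n\geq5$, and for even $n\geq6$ respectively. Theorem~\ref{thm:reduction} converts each of these into a directed Hamiltonian path of $P(n,n-2)$. I will check that these three cases exhaust all $n\geq4$, so no case is missed.

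For the cycle half, I will use Proposition~\ref{prop:parity}. A directed Hamiltonian cycle of $\operatorname{Cay}(A_n,\{\alpha,\beta\})$ is a cycle cover with exactly one cycle. The proposition shows that every cycle cover has the same cycle-count parity as $q_n$, and $q_n$ is even. So a one-cycle cover cannot exist.

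The main point needing care is that $q_n$ is even for every $n\geq4$. For odd $n\geq5$ we have $q_n=(n-1)!/2$, and $(n-1)!$ is divisible by $4$ since $n-1\geq4$. For $n=4$ we get $q_4=4\cdot2/2=4$. For even $n\geq6$ we have $q_n=n(n-2)!/2$; here $n$ is even and $(n-2)!$ is even, so $n(n-2)!$ is divisible by $4$.

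Finally, I will transport both statements to $P(n,n-2)$ through the isomorphism of Lemma~\ref{lem:cayley-isomorphism}, which preserves directed Hamiltonian paths and cycles. This gives the exact cycle/path distinction claimed.
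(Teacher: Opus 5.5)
Your proposal is correct and matches the paper's argument. The paper also obtains the corollary by combining Theorem~\ref{thm:main} with Proposition~\ref{prop:parity} via the isomorphism of Lemma~\ref{lem:cayley-isomorphism}, and your case check that $q_n$ is even repeats the last line of the proof of Proposition~\ref{prop:parity}. Transporting the path half again is harmless but redundant, since Theorem~\ref{thm:reduction} already gives the path in $P(n,n-2)$.
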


We finish by stating the constructive content of the proof. Order
permutations by one-line notation and cosets by their least elements. For odd
$n$, use the identification in \eqref{eq:odd-vertex-map}, take the identity
as the root, and apply Theorem~\ref{thm:root-deleted} recursively to the
$2$-path \eqref{eq:2path}, using the choices in
Remark~\ref{rem:deterministic}. For even $n$, use
\eqref{eq:even-vertex-map}, take $s=1$, apply
Corollary~\ref{cor:layer-deletion} to the fan \eqref{eq:fan}, and construct
$B$. Every component of $B$ has two alternating perfect matchings; choose
the one containing the least edge in that component. This produces the
spanning two-component hyperforest.

The number of selected hyperedges is
\begin{equation}\label{eq:forest-size}
 \frac{q_n-2}{2},
\end{equation}
where $q_n$ is given in \eqref{eq:qn}. Start with $s(g)=g\alpha$ and switch
the selected hyperedges in loose-hyperforest order. The result has two
cycles. Choose the least $\la\rho\ra$-coset meeting both hyperforest
components and, within it, the least ordered pair of elements in different
successor cycles. Exchange their targets as in Lemma~\ref{lem:splice}.
Delete the unique invalid transition, traverse the remaining successor
relation from its head to its tail, and replace every $g\in A_n$ by
$(g(1),\ldots,g(n-2))$.

\begin{proposition}\label{prop:algorithm}
The preceding deterministic procedure terminates for every $n\geq4$ and
outputs a directed Hamiltonian path in $P(n,n-2)$.
\end{proposition}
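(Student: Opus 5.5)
The plan is to verify two things: every step of the procedure is well defined and finite, and the output coincides with the object whose existence was already proved, so that Theorem~\ref{thm:reduction} applies. I will check the steps in the order they are executed.

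\textbf{Termination and well-definedness.} All choices are made from finite, nonempty, totally ordered sets, using lexicographic order on one-line notation and cosets ordered by their least elements. Hence each "least admissible" choice exists as soon as the admissible set is nonempty. Nonemptiness is exactly what the existence proofs supply.
\begin{itemize}
\item In Theorem~\ref{thm:root-deleted}: a simplicial vertex of $T$ with $T-x$ a $2$-tree exists; a cyclic $\sigma$ satisfying \eqref{eq:sigma-condition} exists since $m-1\geq4$; and each $r_j$ exists in $\Omega$ by the parity adjustment.
\item The recursion decreases $m$ and bottoms out at $m=4$, where the five listed hyperedges are used in a fixed order.
\item In Corollary~\ref{cor:layer-deletion}: $r_c$ exists because $|L_c|>m-2$.
\item For even $n$: $B$ is $2$-regular by Lemma~\ref{lem:B-regular}, so each component is an even cycle. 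Choosing the alternating class containing the least edge is therefore a well-defined perfect matching.
\item For $n=4$: $\mathcal H_4$ has at least one hyperedge, by Lemma~\ref{lem:H-connected} and since there are four vertices.
\end{itemize}
The number of selected hyperedges, \eqref{eq:forest-size}, is finite. Switching and splicing are finite operations on a successor table of size $n!/2$.

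\textbf{Correctness of the output.} First, the selected hyperedge set is a spanning loose hyperforest of $\mathcal H_n$ with two components. This follows because each deterministic choice is one instance of the choices allowed in the proofs of Propositions~\ref{prop:odd-forest}, \ref{prop:even-forest} and Lemma~\ref{lem:n4}. Those proofs only require the constraints, not any particular instance, and the incidence isomorphisms $\Phi,\Psi$ transfer the forest to $\mathcal H_n$.

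Second, Proposition~\ref{prop:forest-cover} shows that switching in loose-hyperforest order gives two cycles.

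Third, a $\la\rho\ra$-coset meeting both components exists by Lemma~\ref{lem:H-connected}, as in Lemma~\ref{lem:splice}. So the least such coset and the least pair of elements lying in distinct cycles exist. Exchanging their targets yields a single cycle with exactly one invalid transition. One point needs care here: the lemma relabels $R$ cyclically so that the chosen pair is $x,x\rho$. Any pair of distinct elements of $R$ has the form $\{y,y\rho\}$ for a suitable $y\in R$, so this relabeling is possible and the lemma applies to the pair actually chosen.

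Finally, deleting the invalid arc leaves a Hamiltonian path in $\operatorname{Cay}(A_n,\{\alpha,\beta\})$. Traversal from its head to its tail visits all $n!/2$ elements. By Lemma~\ref{lem:cayley-isomorphism}, the truncation $g\mapsto(g(1),\ldots,g(n-2))$ is the inverse of the completion map, which maps the path to a Hamiltonian path of $P(n,n-2)$.

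\textbf{Main obstacle.} The delicate point is the "loose-hyperforest order" used when switching. The deterministic construction produces the hyperedges recursively, with the $F_j$ added in the cyclic order of $\sigma$ and the matching edges added last. I must check that this order has the property that every hyperedge meets three distinct current cycles at the moment it is switched. I would prove this by following the construction order literally, applying the component-counting arguments in the proofs of Theorem~\ref{thm:root-deleted} and Proposition~\ref{prop:even-forest}. Alternatively, I would note that acyclicity of the incidence graph alone ensures that every step of any order consistent with the recursive build merges three distinct cycles.
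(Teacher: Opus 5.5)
Your proposal is correct and takes essentially the paper's route. Termination holds because each recursive call removes a vertex of the $2$-tree and every tie-break is over a nonempty finite set, and correctness follows by chaining Propositions~\ref{prop:odd-forest}, \ref{prop:even-forest}, \ref{prop:forest-cover}, Lemmas~\ref{lem:H-connected}, \ref{lem:splice} and the isomorphism \eqref{eq:cayley-isomorphism}. Your ``main obstacle'' dissolves more simply than you suggest: switches on distinct $\la\rho\ra$-cosets act on disjoint sets of sources and therefore commute, so the final successor map is independent of the switching order.
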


\begin{proof}
For \(n=4\), the construction is given explicitly by Lemma~7.1.
Assume henceforth that \(n\ge5\). 
Every recursive call deletes one vertex of the underlying $2$-tree, so the
hyperforest construction terminates. Propositions~\ref{prop:odd-forest} and
\ref{prop:even-forest} establish the required hyperforest. Its switches give
two cycles by Proposition~\ref{prop:forest-cover}. A crossing hyperedge
exists by Lemma~\ref{lem:H-connected}, and the splice and cut are valid by
Lemma~\ref{lem:splice} and Theorem~\ref{thm:reduction}. Every tie-breaking
rule selects from a nonempty finite set.
\end{proof}


\begin{thebibliography}{99}

\bibitem{Brunat1997}
J.M. Brunat, M.A. Fiol, M.L. Fiol,
Digraphs on permutations,
\emph{Discrete Math.} 174 (1997) 73--86.

\bibitem{Chang2025}
Z. Chang, L. Diao, S. Wang,
Efficient methods of constructing universal cycles for $k$-permutations,
\emph{Discrete Appl. Math.} 374 (2025) 120--134.

\bibitem{Chang2026}
Z. Chang, L. Diao,
Efficient methods of constructing shorthand universal cycles for permutations,
\emph{Discrete Appl. Math.} 386 (2026) 217--227.

\bibitem{Chung1992}
F.R.K. Chung, P. Diaconis, R.L. Graham,
Universal cycles for combinatorial structures,
\emph{Discrete Math.} 110 (1992) 43--59.

\bibitem{Curren1996}
S.J. Curren, J.A. Gallian,
Hamiltonian cycles in Cayley graphs and digraphs---a survey,
\emph{Discrete Math.} 156 (1996) 1--18.

\bibitem{Gao2019}
A.L.L. Gao, S. Kitaev, W. Steiner, P.B. Zhang,
On a greedy algorithm to construct universal cycles for permutations,
\emph{Internat. J. Found. Comput. Sci.} 30 (2019) 61--72.

\bibitem{Gabric2020}
D. Gabri\'c, J. Sawada, A. Williams, D.C.H. Wong,
A successor rule framework for constructing $k$-ary de Bruijn sequences and
universal cycles,
\emph{IEEE Trans. Inform. Theory} 66 (2020) 679--687.

\bibitem{Holroyd2012}
A.E. Holroyd, F. Ruskey, A. Williams,
Shorthand universal cycles for permutations,
\emph{Algorithmica} 64 (2012) 215--245.

\bibitem{Isaak2006}
G. Isaak,
Hamiltonicity of digraphs for universal cycles of permutations,
\emph{European J. Combin.} 27 (2006) 801--805.

\bibitem{Jackson1993}
B.W. Jackson,
Universal cycles of $k$-subsets and $k$-permutations,
\emph{Discrete Math.} 117 (1993) 141--150.

\bibitem{Jackson2009}
B. Jackson, B. Stevens, G. Hurlbert,
Research problems on Gray codes and universal cycles,
\emph{Discrete Math.} 309 (2009) 5341--5348.

\bibitem{Johnson2009}
J.R. Johnson,
Universal cycles for permutations,
\emph{Discrete Math.} 309 (2009) 5264--5270.

\bibitem{Rankin1948}
R.A. Rankin,
A campanological problem in group theory,
\emph{Proc. Cambridge Philos. Soc.} 40 (1948) 17--25.

\bibitem{Ruskey2010}
F. Ruskey, A. Williams,
An explicit universal cycle for the $(n-1)$-permutations of an $n$-set,
\emph{ACM Trans. Algorithms} 6 (2010), Article 45, 12 pp.

\bibitem{Sawada2023}
J. Sawada, J. Sears, A. Trautrim, A. Williams,
Concatenation trees: A framework for efficient universal cycle and de Bruijn
sequence constructions,
arXiv:2308.12405 (2023).

\bibitem{Starling2003}
A.G. Starling, J.B. Klerlein, J. Kier, E.C. Carr,
Cycles in the digraph $P(n,k)$: an algorithm,
\emph{Congr. Numer.} 162 (2003) 129--137.

\bibitem{Swan1999}
R.G. Swan,
A simple proof of Rankin's campanological theorem,
\emph{Amer. Math. Monthly} 106 (1999) 159--161.

\bibitem{Witte1984}
D. Witte, J.A. Gallian,
A survey: Hamiltonian cycles in Cayley graphs,
\emph{Discrete Math.} 51 (1984) 293--304.

\bibitem{Wong2017}
D.C.H. Wong,
A new universal cycle for permutations,
\emph{Graphs Combin.} 33 (2017) 1393--1399.

\end{thebibliography}
\end{document}